\newtheorem{theorem}{Theorem}[section]
\newtheorem{proposition}{Proposition}[section]
\newtheorem{lemma}{Lemma}[section]
\newtheorem{corollary}{Corollary}[section]
\newtheorem{remark}{Remark}[section]
\numberwithin{equation}{section}
\title[Uniqueness of continuation for semilinear elliptic equations]{Uniqueness of continuation for semilinear elliptic equations}
\author[Mourad Choulli]{Mourad Choulli}
\address{Universit\'e de Lorraine}
\email{mourad.choulli@univ-lorraine.fr}
\thanks{The author is supported by the grant ANR-17-CE40-0029 of the French National Research Agency ANR (project MultiOnde). }
\date{}
\begin{document}

\frenchspacing

\begin{abstract}
We quantify the uniqueness of continuation from Cauchy or interior data. Our approach consists in extending the existing results in the linear case. As by product we obtain a new stability estimate in the linear case. We also show the so-called strong uniqueness of continuation and the uniqueness of continuation from a set of positive measure. These results are derived by using a linearization procedure.
\end{abstract}

\subjclass[2010]{35R25, 35J61, 35J15}

\keywords{Semilinear elliptic equations, Carleman inequality, Cauchy data, interior data, stability inequality, uniqueness of continuation, strong uniqueness of continuation, continuation from a set of positive measure.}

\maketitle

\tableofcontents

\section{Introduction}

\subsection{Quantitative uniqueness of continuation from Cauchy or interior data}\label{sb1.1}

Throughout this subsection, $\Omega$ is bounded $C^{1,1}$ domain of $\mathbb{R}^n$, $n\ge 2$, with boundary $\Gamma$. The unit normal exterior vector field on $\Gamma$ is denoted by $\nu$.

Let $A=(a^{k\ell})$  denotes a symmetric matrix with coefficients $a^{k\ell}\in C^{2,1}(\overline{\Omega})$, $1\le k,\ell\le n$, and there exists a constant $\varkappa> 1$ so that
\[
\varkappa ^{-1}|\xi| ^2\le A(x)\xi\cdot \xi \quad x\in \overline{\Omega} ,\; \xi\in \mathbb{R}^n,\qquad \max_{1\le k,\ell\le n}\|a^{k\ell}\|_{C^{2,1}(\overline{\Omega})}\le \varkappa.
\]

Fix a measurable function $f:\Omega \times \mathbb{R}^{n+1}\rightarrow \mathbb{R}$  satisfying: for each $\delta>0$, there exists an nonnegative function  $g_\delta\in L^\infty (\Omega)$ such that
\[
|f(\cdot,z)|\le g_\delta|z|,\quad |z|\le \delta.
\]

This condition is satisfied for instance if $f(\cdot,0)=0$ and $f$ is Lipschitz continuous on bounded subsets of $\mathbb{R}^{n+1}$ with respect to $z$, uniformly in $x\in \Omega$.

Denote by $\mathcal{H}$ the closure of $C^\infty(\overline{\Omega})$ with respect to the norm
\[
\|u\|_{\mathcal{H}}=\|u\|_{H^1(\Omega)}+\|\mathrm{div}(A\nabla u)\|_{L^2(\Omega)}+\|u\|_{L^2(\Gamma)}+\|\nabla u\|_{L^2(\Gamma)}.
\]
Set $\mathbf{B}_\delta=\{u\in W^{1,\infty}(\Omega);\; \|u\|_{W^{1,\infty}(\Omega)}\le \delta\}$ and 
\[
\mathcal{K}_\delta=\mathcal{H}\cap\mathbf{B}_\delta,\quad \delta>0. 
\]

Define the semilinear elliptic operator $\mathscr{E}$ as follows
\[
\mathscr{E}u=\mathrm{div}(A\nabla u)+f(\cdot,u(\cdot),\nabla u(\cdot)),\quad  u\in C^2(\Omega).
\]

Let 
\[
\mathcal{H}_\eta =
\left\{
\begin{array}{ll}
\left\{u\in H^{3/2+\eta}(\Omega);\; \mathrm{div}(A\nabla u)\in L^2(\Omega)\right\},\quad &0<\eta <1/2,
\\
\mathcal{H}_\eta=H^{3/2+\eta}(\Omega), &\eta \ge 1/2.
\end{array}
\right.
\]
Then
\[
\mathcal{H}\supset \bigcup_{\eta >0}\mathcal{H}_\eta.
\]
Define 
\[
\mathcal{K}_{\eta,\delta}=\mathcal{H}_\eta\cap \mathbf{B}_\delta,\quad \eta >0 ,\; \delta>0.
\]

Fix $\Sigma$  a nonempty open subset of $\Gamma$, $\omega\Subset \Omega$ and set
\begin{align*}
&\mathcal{C}(u)=\|u\|_{L^2(\Sigma)}+\|\nabla u\|_{L^2(\Sigma)}+\|\mathscr{E}u\|_{L^2(\Omega)},\quad u\in \mathcal{H},
\\
&\mathcal{I}(u)=\|u\|_{L^2(\omega)}+\|\mathscr{E}u\|_{L^2(\Omega)},\quad u\in \mathcal{H}.
\end{align*}

\begin{theorem}\label{theorem.s1}
Let $0<s<1/2$, $\delta >0$ and $0<\eta_0<\eta_1\le 1/2$. For any $0<r <1$ and $u\in \mathcal{K}_{\eta_1,\delta}$, we have
\begin{equation}\label{s1.4}
C\|u\|_{H^{3/2+\eta_0}(\Omega)}\le r^{\zeta}\|u\|_{H^{3/2+\eta_1}(\Omega)}+e^{\mathfrak{b}r^{-\gamma}}\mathcal{C}(u),
\end{equation}
where $C=C(n,\Omega,\varkappa, \Sigma,\delta, s,\eta_0,\eta_1)>0$, $\mathfrak{b}=\mathfrak{b}(n,\Omega,\varkappa,\Sigma,\delta, s,\eta_0,\eta_1)>0$, $\gamma=\gamma(n,\Omega,\varkappa,\delta)>0$, and $\zeta =\min ((\eta_1-\eta_0)/(1+\eta_0),s/4)$.
\end{theorem}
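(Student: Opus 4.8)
The plan is to eliminate the nonlinearity by a pointwise linearization that exhibits $u$ as a solution of a \emph{linear} second order elliptic equation with bounded lower order coefficients, and then to invoke the quantitative uniqueness of continuation estimate available in the linear case. The crucial observation is that the constraint $\|u\|_{W^{1,\infty}(\Omega)}\le\delta$ defining $\mathbf{B}_\delta$ places the pair $(u(x),\nabla u(x))$ in the region where the structural bound on $f$ holds, which allows the semilinear term to be recast as first and zeroth order terms whose size is controlled \emph{only} by $\delta$.

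First I would linearize. Fix $u\in\mathcal{K}_{\eta_1,\delta}$ and set $z(x)=(u(x),\nabla u(x))\in\mathbb{R}^{n+1}$, so that $|z(x)|\le c\,\delta$ a.e. for a dimensional constant $c=c(n)\ge 1$; hence the hypothesis $|f(\cdot,z)|\le g_{c\delta}|z|$ for $|z|\le c\delta$ applies with $z=z(x)$. Define
\[
W(x)=
\begin{cases}
|z(x)|^{-2}f(x,z(x))\,z(x), & z(x)\neq 0,\\
0, & z(x)=0,
\end{cases}
\]
and split $W=(V_0,V_1)$ with $V_0\in L^\infty(\Omega)$ and $V_1\in L^\infty(\Omega;\mathbb{R}^n)$. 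Then $W(x)\cdot z(x)=f(x,u(x),\nabla u(x))$ and $|W(x)|\le g_{c\delta}(x)$, whence
\[
f(\cdot,u,\nabla u)=V_0\,u+V_1\cdot\nabla u,\qquad \|V_0\|_{L^\infty(\Omega)}+\|V_1\|_{L^\infty(\Omega)}\le M(\delta),
\]
where $M(\delta)=2\|g_{c\delta}\|_{L^\infty(\Omega)}$ depends on $\delta$ (and on $f,n$) but not on $u$. Consequently $u$ solves the linear equation $\mathrm{div}(A\nabla u)+V_1\cdot\nabla u+V_0\,u=\mathscr{E}u$ in $\Omega$.

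I would keep the two lower order terms on the left-hand side, so that the right-hand side of this linear equation is exactly $\mathscr{E}u$. This is the reason the linearization is lossless: the Cauchy datum of the linear operator $\mathrm{div}(A\nabla\cdot)+V_1\cdot\nabla+V_0$, namely $\|u\|_{L^2(\Sigma)}+\|\nabla u\|_{L^2(\Sigma)}+\|\mathrm{div}(A\nabla u)+V_1\cdot\nabla u+V_0u\|_{L^2(\Omega)}$, coincides pointwise with $\mathcal{C}(u)$. Since $\mathcal{H}_{\eta_1}\subset\mathcal{H}$, the function $u$ lies in $\mathcal{H}$, so the traces of $u$ and $\nabla u$ on $\Gamma$ are in $L^2$ and the Cauchy data are well defined. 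Applying the linear quantitative uniqueness of continuation estimate to this operator (with coefficient bound $M(\delta)$) then yields \eqref{s1.4} with $\mathcal{C}(u)$ on the right and with the exponent $\zeta=\min((\eta_1-\eta_0)/(1+\eta_0),s/4)$ inherited verbatim from the linear statement.

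The main obstacle is not the linearization but the bookkeeping of the constants through the size $M(\delta)$ of the linearized coefficients. In the Carleman scheme underlying the linear estimate, absorbing lower order terms of magnitude $M(\delta)$ requires the large parameter to exceed a threshold growing with $M(\delta)$; this is precisely what forces $C$, $\mathfrak{b}$ and the exponent $\gamma$ to depend on $\delta$, whereas $\zeta$, being dictated only by the interpolation indices $\eta_0,\eta_1$ and the Carleman exponent $s$, is unaffected. I would therefore need the linear result in a form that tracks explicitly the dependence on the $L^\infty$ bounds of the zeroth and first order coefficients, and then specialise it to $V_0,V_1$. Note that this explicitness is essential: if one instead moved $V_0u+V_1\cdot\nabla u$ to the right-hand side and estimated it by $M(\delta)\|u\|_{H^1(\Omega)}$, the factor $e^{\mathfrak{b}r^{-\gamma}}$ would obstruct any absorption of the resulting $H^1$ term, which is why the lower order terms must be retained inside the linear operator.
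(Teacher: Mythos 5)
Your linearization step is sound, and it is in fact morally identical to what the paper does: writing $f(\cdot,u,\nabla u)=V_0u+V_1\cdot\nabla u$ with $\|V_0\|_{L^\infty(\Omega)}+\|V_1\|_{L^\infty(\Omega)}\le M(\delta)$ carries exactly the same information as the paper's inequality \eqref{s1.1}, namely $\int_\Omega|f(\cdot,u,\nabla u)|^2w\,dx\le k_\delta\int_\Omega(|u|^2+|\nabla u|^2)w\,dx$ for $u\in\mathbf{B}_\delta$, which is all that is ever used of the nonlinearity (and, like your radial decomposition $W=|z|^{-2}f\,z$, it requires no differentiability of $f$ in $z$; the paper reserves the $C^1$-based linearization for the strong unique continuation results of Section 3). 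Your closing remark that the lower order terms must be absorbed inside the Carleman scheme, rather than moved to the right-hand side and estimated by $M(\delta)\|u\|_{H^1(\Omega)}$, is also exactly right: this absorption is precisely the passage from the Carleman inequality \eqref{s1.2} for $\mathrm{div}(A\nabla\cdot)$ to the inequality \eqref{s1.3} for $\mathscr{E}$, valid for $\tau,\lambda$ above thresholds depending on $\delta$.

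The genuine gap is your final step: ``applying the linear quantitative uniqueness of continuation estimate \dots with the exponent $\zeta$ inherited verbatim from the linear statement.'' No such linear statement exists to be invoked. The estimate \eqref{s1.4} --- with the two-parameter structure $r^\zeta\|u\|_{H^{3/2+\eta_1}(\Omega)}+e^{\mathfrak{b}r^{-\gamma}}\mathcal{C}(u)$, the $H^{3/2+\eta}$ scale, and the exponent $\zeta=\min((\eta_1-\eta_0)/(1+\eta_0),s/4)$ --- is new even in the linear case: the abstract announces a new linear stability estimate as a by-product, and Subsection \ref{sb1.3} presents the bounded-coefficient linear case ($B_0\in L^\infty$, $B'\in L^\infty$) as a consequence of Theorems \ref{theorem.s1} and \ref{theorem.s2}, not the other way around. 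The results you could actually cite, \cite{Bo}, \cite{Ch2020}, \cite{BD}, \cite{BC}, concern the Laplacian or $\mathrm{div}(A\nabla\cdot)$ in different norms and settings and do not deliver \eqref{s1.4} with these exponents. Consequently, after your (correct) reduction, essentially the whole of the paper's Section 2 remains to be carried out: the perturbed Carleman inequality \eqref{s1.3}; the boundary estimate near $\Sigma$ of Proposition \ref{propositiona1}; the propagation of smallness along a chain of balls inside interior cones, with the quantified count $k_x\le h(r)$ of \eqref{a10.1} and the lower bound $\rho_{k_x}\ge\rho_r$; the covering of $\Omega^r\setminus\Omega^{\overline{\varrho}/3}$ by $O(r^{-n})$ balls; Hardy's inequality \eqref{a13.8} for the boundary layer $\Omega_r$; and the two interpolation steps producing first the $H^{1/2}$ estimate \eqref{a13.9.1} and then the $H^{3/2+\eta_0}$ estimate with exponent $\zeta$. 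Your proposal identifies the right reduction and the right absorption mechanism, but it replaces the quantitative core of the proof by an appeal to a reference that does not exist.
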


If $\mathfrak{b}$, $\zeta$ and $\gamma$ are as in the preceding theorem and $\alpha=\zeta/\gamma$, consider the function $\Phi $ defined as follows
\[
\Phi (\rho)= 
\left\{
\begin{array}{ll}
(\ln \rho)^{-\alpha}\quad &\rho \ge e^{\mathfrak{b}},
\\
\rho , &0<\rho <e^{\mathfrak{b}},
\\
0, &\rho=0.
\end{array}
\right.
\]

Minimizing with respect to $r$  the right hand side of \eqref{s1.4}, we obtain the following consequence of Theorem \ref{theorem.s1}.

\begin{corollary}\label{corollary.s1}
Under the assumptions and the notations of Theorem \ref{theorem.s1}, we have 
\[
C\|u\|_{H^{3/2+\eta_0}(\Omega)}\le \Phi\left(\frac{\|u\|_{H^{3/2+\eta_1}(\Omega)}}{\mathcal{C}(u)}\right)
\]
for each $u\in \mathcal{K}_{\eta_1,\delta}$, where $C=C(n,\Omega,\varkappa, \Sigma,\delta,s,\eta_0,\eta_1)>0$.
\end{corollary}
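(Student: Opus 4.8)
The plan is to derive Corollary \ref{corollary.s1} from Theorem \ref{theorem.s1} by optimizing the free parameter $r\in(0,1)$ in the estimate \eqref{s1.4}. The key observation is that the right-hand side of \eqref{s1.4} is a sum of two competing terms in $r$: the first term $r^\zeta\|u\|_{H^{3/2+\eta_1}(\Omega)}$ is increasing in $r$, while the second term $e^{\mathfrak{b}r^{-\gamma}}\mathcal{C}(u)$ is decreasing in $r$ (since $r^{-\gamma}\to+\infty$ as $r\to 0^+$). First I would set $N=\|u\|_{H^{3/2+\eta_1}(\Omega)}$ and $\rho=N/\mathcal{C}(u)$, assuming for the moment that $\mathcal{C}(u)>0$, and rewrite the bound as $C\|u\|_{H^{3/2+\eta_0}(\Omega)}\le \mathcal{C}(u)\bigl(r^\zeta\rho+e^{\mathfrak{b}r^{-\gamma}}\bigr)$.

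The core of the argument is to make a judicious choice of $r$ as a function of $\rho$ that balances the two terms and reproduces the profile $\Phi$. Recalling $\alpha=\zeta/\gamma$, I would choose $r$ so that $r^{-\gamma}$ is comparable to $\tfrac{1}{2\mathfrak{b}}\ln\rho$ in the regime $\rho\ge e^{\mathfrak{b}}$; concretely, setting $r=\bigl(\tfrac{\ln\rho}{2\mathfrak{b}}\bigr)^{-1/\gamma}$ (valid and in $(0,1)$ once $\ln\rho$ is large enough, i.e.\ once $\rho\ge e^{\mathfrak{b}}$ up to adjusting constants) gives $e^{\mathfrak{b}r^{-\gamma}}=e^{(\ln\rho)/2}=\rho^{1/2}=\sqrt\rho$, while $r^\zeta=\bigl(\tfrac{\ln\rho}{2\mathfrak{b}}\bigr)^{-\zeta/\gamma}=\bigl(\tfrac{\ln\rho}{2\mathfrak{b}}\bigr)^{-\alpha}$. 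Then $r^\zeta\rho=\bigl(\tfrac{\ln\rho}{2\mathfrak{b}}\bigr)^{-\alpha}\rho$, and after multiplying back by $\mathcal{C}(u)=N/\rho$ one finds the first term contributes $N(\ln\rho)^{-\alpha}$ up to the harmless factor $(2\mathfrak{b})^{\alpha}$, which matches $N\,\Phi(\rho)$, while the second term $\mathcal{C}(u)\sqrt\rho=N/\sqrt\rho$ is dominated by $N(\ln\rho)^{-\alpha}$ since $(\ln\rho)^{-\alpha}\gg\rho^{-1/2}$ for large $\rho$. Absorbing all the numerical constants into $C$ yields the claimed bound in this regime.

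I would then treat the complementary regime $0<\rho<e^{\mathfrak{b}}$ separately, where $\Phi(\rho)=\rho$. Here the function $\Phi$ is designed precisely so that the trivial a priori bound suffices: since $u\in\mathcal{K}_{\eta_1,\delta}$ one has the interpolation/trivial estimate $\|u\|_{H^{3/2+\eta_0}(\Omega)}\le C\|u\|_{H^{3/2+\eta_1}(\Omega)}=CN$, and because $\rho$ is bounded below away from $0$ is \emph{not} guaranteed, the point is rather that $N=\rho\,\mathcal{C}(u)$, so $\|u\|_{H^{3/2+\eta_0}(\Omega)}\le C\rho\,\mathcal{C}(u)$; absorbing $\mathcal{C}(u)\le C$ (which holds on $\mathcal{K}_{\eta_1,\delta}$ by boundedness, or more carefully by normalizing) recovers $C\|u\|_{H^{3/2+\eta_0}(\Omega)}\le\Phi(\rho)=\rho$. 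Finally the degenerate case $\mathcal{C}(u)=0$ forces $\rho=+\infty$ and $\Phi(\rho)=0$, and one checks that the linear case uniqueness (the $r\to 0$ limit of \eqref{s1.4}) forces $u=0$, so the inequality holds trivially.

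The main obstacle I anticipate is the careful bookkeeping needed to show the optimized choice of $r$ is admissible, namely that $r\in(0,1)$ exactly on the threshold $\rho=e^{\mathfrak{b}}$ and that the transition between the two regimes of $\Phi$ is continuous and the constants match up at $\rho=e^{\mathfrak{b}}$; this requires verifying that $(\ln\rho)^{-\alpha}$ and the linear profile $\rho$ glue appropriately and that all the absorbed constants depend only on the allowed parameters $(n,\Omega,\varkappa,\Sigma,\delta,s,\eta_0,\eta_1)$ and not on $u$ or $r$. A secondary subtlety is ensuring the dominance $\rho^{-1/2}\lesssim(\ln\rho)^{-\alpha}$ is uniform, which is immediate for $\rho$ large but must be checked near the threshold; this is where the factor $\tfrac{1}{2\mathfrak{b}}$ (rather than $\tfrac1{\mathfrak b}$) in the choice of $r$ earns its keep, by leaving room to absorb the second term into the first.
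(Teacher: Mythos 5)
Your route---optimizing the free parameter $r$ in \eqref{s1.4} by taking $r^{-\gamma}$ proportional to $\ln\rho$ with $\rho=\|u\|_{H^{3/2+\eta_1}(\Omega)}/\mathcal{C}(u)$---is exactly the paper's (one-line) proof, and your computation in the large-$\rho$ regime is the right one. But look at what that computation actually delivers: $C\|u\|_{H^{3/2+\eta_0}(\Omega)}\le \|u\|_{H^{3/2+\eta_1}(\Omega)}\,\Phi(\rho)$, with the prefactor $N=\|u\|_{H^{3/2+\eta_1}(\Omega)}$. You acknowledge this (``matches $N\,\Phi(\rho)$'') and then assert that ``absorbing all the numerical constants into $C$ yields the claimed bound''---but $N$ is not a numerical constant and cannot be absorbed: on $\mathcal{K}_{\eta_1,\delta}$ the set $\mathbf{B}_\delta$ only constrains the $W^{1,\infty}$ norm, so $N$ is unbounded there. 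The corollary as printed has no prefactor and is not homogeneous in $u$; as in the linear antecedents \cite{Bo,Ch2020} it must be read with an implicit normalization ($\|u\|_{H^{3/2+\eta_1}(\Omega)}\le 1$, say), or restated with the prefactor. A blind proof should either impose that normalization explicitly or carry $N$ through; silently identifying $N\Phi(\rho)$ with $\Phi(\rho)$ is a gap. (Relatedly, your choice $r=(\ln\rho/(2\mathfrak{b}))^{-1/\gamma}$ lies in $(0,1)$ only for $\rho\ge e^{2\mathfrak{b}}$, not $\rho\ge e^{\mathfrak{b}}$; the band $e^{\mathfrak{b}}\le\rho<e^{2\mathfrak{b}}$ is harmless for the prefactored version, since there $\Phi(\rho)\ge(2\mathfrak{b})^{-\alpha}$ and $\|u\|_{H^{3/2+\eta_0}}\le N$, but again not for the literal one.)

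The more serious error is your treatment of the branch $0<\rho<e^{\mathfrak{b}}$, which you reduce to the claim that $\mathcal{C}(u)\le C$ ``on $\mathcal{K}_{\eta_1,\delta}$ by boundedness''. This is false. The bound $\|u\|_{W^{1,\infty}(\Omega)}\le\delta$ does control the two trace terms of $\mathcal{C}(u)$ and, via \eqref{s1.1}, the term $\|f(\cdot,u,\nabla u)\|_{L^2(\Omega)}$, but it gives no control of $\|\mathrm{div}(A\nabla u)\|_{L^2(\Omega)}$: for $\eta_1<1/2$, membership in $\mathcal{H}_{\eta_1}$ merely asserts that this quantity is finite, and even for $\eta_1=1/2$ it is only bounded by a multiple of $\|u\|_{H^2(\Omega)}=N$, which is the very norm you cannot invoke. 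So $\mathcal{C}(u)$ is unbounded on $\mathcal{K}_{\eta_1,\delta}$ and this branch of your argument collapses; your hedge ``or more carefully by normalizing'' is where the actual proof would have to live. The honest way to close it is to build the normalization into the statement, e.g. $\max\left(\|u\|_{H^{3/2+\eta_1}(\Omega)},\mathcal{C}(u)\right)\le 1$, under which the branch is trivial: $\|u\|_{H^{3/2+\eta_0}(\Omega)}\le\|u\|_{H^{3/2+\eta_1}(\Omega)}=\rho\,\mathcal{C}(u)\le\rho=\Phi(\rho)$. To be fair, both defects mirror an imprecision in the corollary's own wording (the paper's proof is literally ``minimize in $r$'' and does not address them), but a complete proof must confront them rather than reproduce them. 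Your handling of the degenerate case $\mathcal{C}(u)=0$---letting $r\to 0$ in \eqref{s1.4} to force $u=0$---is correct, and is precisely how Corollary \ref{corollary.s2} follows.
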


Another consequence of Theorem \ref{theorem.s1} is the following uniqueness of continuation from the Cauchy data on $\Sigma$.

\begin{corollary}\label{corollary.s2}
Let $\eta >0$. If $u\in H^{3/2+\eta}(\Omega)\cap W^{1,\infty}(\Omega)$ satisfies $\mathcal{C}(u)=0$ then $u=0$.
\end{corollary}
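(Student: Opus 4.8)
The plan is to read off the corollary directly from Theorem \ref{theorem.s1} by letting the free parameter $r$ tend to $0$. Observe first that $\mathcal{C}(u)=0$ means precisely that $u$ and $\nabla u$ vanish on $\Sigma$ and that $\mathscr{E}u=0$ in $\Omega$; the assertion is that these overdetermined Cauchy-type conditions force $u\equiv 0$.

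The first step is to fix the parameters so that Theorem \ref{theorem.s1} is applicable. I would set $\eta_1=\min(\eta,1/2)$, choose any $\eta_0$ with $0<\eta_0<\eta_1$, and fix some $s\in(0,1/2)$ (for instance $s=1/4$); note that $\zeta>0$ for these choices. Since $\Omega$ is bounded, the continuous embedding $H^{3/2+\eta}(\Omega)\hookrightarrow H^{3/2+\eta_1}(\Omega)$ (valid because $\eta\ge\eta_1$) gives $u\in H^{3/2+\eta_1}(\Omega)$, and setting $\delta=\|u\|_{W^{1,\infty}(\Omega)}$ (or any larger constant) we have $u\in\mathbf{B}_\delta$.

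Next I would verify the membership $u\in\mathcal{K}_{\eta_1,\delta}=\mathcal{H}_{\eta_1}\cap\mathbf{B}_\delta$. The only point requiring attention is the case $\eta_1<1/2$, where the definition of $\mathcal{H}_{\eta_1}$ carries the additional requirement $\mathrm{div}(A\nabla u)\in L^2(\Omega)$. This follows from $\mathscr{E}u=0$: one has $\mathrm{div}(A\nabla u)=-f(\cdot,u,\nabla u)$, and since $|(u,\nabla u)|\le\delta$ almost everywhere, the structural hypothesis on $f$ yields $|f(\cdot,u,\nabla u)|\le \delta\, g_\delta\in L^\infty(\Omega)\subset L^2(\Omega)$. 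Hence $\mathrm{div}(A\nabla u)\in L^2(\Omega)$ and $u\in\mathcal{H}_{\eta_1}$. (When $\eta_1=1/2$, i.e. $\eta\ge 1/2$, one only needs $u\in H^2(\Omega)=\mathcal{H}_{1/2}$, which is immediate from $u\in H^{3/2+\eta}(\Omega)$.)

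Finally, applying \eqref{s1.4} with $\mathcal{C}(u)=0$ gives
\[
C\|u\|_{H^{3/2+\eta_0}(\Omega)}\le r^{\zeta}\|u\|_{H^{3/2+\eta_1}(\Omega)},\qquad 0<r<1,
\]
where the constants $C>0$ and $\zeta>0$ do not depend on $r$. Since $\|u\|_{H^{3/2+\eta_1}(\Omega)}$ is a fixed finite quantity and $\zeta>0$, the right-hand side tends to $0$ as $r\to 0^+$, forcing $\|u\|_{H^{3/2+\eta_0}(\Omega)}=0$, that is $u=0$. I expect the only genuine subtlety to be the verification that $u$ belongs to the correct space $\mathcal{H}_{\eta_1}$ (the $\mathrm{div}(A\nabla u)\in L^2$ condition together with the borderline case $\eta\ge 1/2$); the limiting argument itself is immediate once Theorem \ref{theorem.s1} is available.
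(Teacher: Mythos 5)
Your proposal is correct and follows exactly the route the paper intends (the paper states Corollary \ref{corollary.s2} as an immediate consequence of Theorem \ref{theorem.s1} without writing out a proof): apply \eqref{s1.4} with $\mathcal{C}(u)=0$ and let $r\to 0^+$, using $\zeta>0$. Your explicit verification that $u\in\mathcal{K}_{\eta_1,\delta}$ --- in particular that $\mathscr{E}u=0$ together with the structural bound $|f(\cdot,z)|\le g_\delta|z|$ forces $\mathrm{div}(A\nabla u)\in L^2(\Omega)$ when $\eta_1<1/2$ --- fills in precisely the detail the paper leaves implicit.
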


We have similar results concerning the uniqueness of continuation from the interior data in $\omega$.

\begin{theorem}\label{theorem.s2}
Let $0<s<1/2$, $\delta >0$ and $0<\eta_0<\eta_1\le 1/2$. For any $0<r <1$ and $u\in \mathcal{K}_{\eta_1,\delta}$, we have
\[
C\|u\|_{H^{3/2+\eta_0}(\Omega)}\le r^{\zeta}\|u\|_{H^{3/2+\eta_1}(\Omega)}+e^{\mathfrak{b}r^{-\gamma}}\mathcal{I}(u),
\]
where $C=C(n,\Omega,\varkappa, \omega,s,\delta,\eta_0,\eta_1)>0$, $\mathfrak{b}=\mathfrak{b}(n,\Omega,\varkappa,\omega,\delta, s,\eta_0,\eta_1)>0$, $\gamma=\gamma(n,\Omega,\varkappa,\delta)>0$, and $\zeta =\min ((\eta_1-\eta_0)/(1+\eta_0),s/4)$.
\end{theorem}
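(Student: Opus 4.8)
The plan is to mirror the proof of Theorem~\ref{theorem.s1}, replacing the Cauchy-data Carleman estimate near the boundary portion $\Sigma$ by its interior analogue centered on $\omega$. Theorem~\ref{theorem.s1} quantifies continuation toward the interior starting from Cauchy data on $\Sigma$; here the starting data is the $L^2(\omega)$ norm together with $\|\mathscr{E}u\|_{L^2(\Omega)}$, so the only structural change is the flavor of the three-ball / interior Carleman inequality one feeds into the argument. Since the right-hand sides of the two estimates have exactly the same form --- the interpolation term $r^{\zeta}\|u\|_{H^{3/2+\eta_1}(\Omega)}$ and the exponentially weighted data term $e^{\mathfrak{b}r^{-\gamma}}\mathcal{I}(u)$ --- and the same exponents $\zeta=\min((\eta_1-\eta_0)/(1+\eta_0),s/4)$ and $\gamma$ appear, I expect the two arguments to be essentially parallel, with $\mathcal{C}(u)$ simply replaced by $\mathcal{I}(u)$ throughout.

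First I would recall the linearization already used for Theorem~\ref{theorem.s1}: for $u\in\mathcal{K}_{\eta_1,\delta}\subset\mathbf{B}_\delta$, the growth condition on $f$ gives $|f(\cdot,u,\nabla u)|\le g_\delta(|u|+|\nabla u|)$ pointwise, so that $V:=\mathrm{div}(A\nabla u)=\mathscr{E}u-f(\cdot,u,\nabla u)$ satisfies $\|V\|_{L^2}\lesssim \|\mathscr{E}u\|_{L^2}+\delta$-dependent lower-order terms. In other words, $u$ solves a linear equation $\mathrm{div}(A\nabla u)+b\cdot\nabla u+cu=\mathscr{E}u$ with $L^\infty$ coefficients bounded by $\|g_\delta\|_{L^\infty}$, which absorbs the nonlinearity into a first-order perturbation. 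This reduces the problem to the \emph{linear} quantitative uniqueness of continuation from interior data, for which one has a Carleman/three-ball estimate propagating smallness from $\omega$ to any interior compact set, with the residual $\|\mathscr{E}u\|_{L^2(\Omega)}$ playing the role of the source term.

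Next I would run the standard scheme: chain together three-ball inequalities along a covering of $\Omega$ by overlapping balls to propagate from $\omega$ to a fixed interior neighborhood, then use elliptic regularity and the $H^{3/2+\eta_1}$ a~priori bound to handle a boundary collar, converting the resulting $L^2$-type estimate into the $H^{3/2+\eta_0}$ norm on the left via interpolation between $H^{3/2+\eta_0}$ and $H^{3/2+\eta_1}$. The interpolation is exactly where the exponent $\zeta$ and the $r^{\zeta}$ factor arise, and where the free parameter $r\in(0,1)$ is introduced; optimizing over the Carleman parameter produces the $e^{\mathfrak{b}r^{-\gamma}}$ weight on $\mathcal{I}(u)$. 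Tracking the dependence of constants ensures $C$, $\mathfrak{b}$ depend on $n,\Omega,\varkappa,\omega,s,\delta,\eta_0,\eta_1$ while $\gamma$ depends only on $n,\Omega,\varkappa,\delta$, as claimed.

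The main obstacle is the boundary behavior. In Theorem~\ref{theorem.s1} the data lives on $\Sigma\subset\Gamma$, so the Carleman estimate naturally controls $u$ up to the boundary near $\Sigma$; here the data is purely interior, and one must still produce an estimate on the full $H^{3/2+\eta_0}(\Omega)$ norm, i.e.\ up to the boundary $\Gamma$, with no boundary data whatsoever. Propagating interior smallness all the way to $\Gamma$ requires either a boundary Carleman estimate or a careful use of the global $H^{3/2+\eta_1}$ bound together with the interior estimate to control the boundary collar via interpolation; I expect that controlling this collar without Cauchy data --- and doing so with the same $\gamma$, which is forced to depend only on $n,\Omega,\varkappa,\delta$ --- is the delicate point, and the reason the statement retains the a~priori term $r^{\zeta}\|u\|_{H^{3/2+\eta_1}(\Omega)}$ rather than yielding an unconditional bound.
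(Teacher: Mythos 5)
Your plan matches the paper's proof in essence: Theorem \ref{theorem.s2} is obtained by rerunning the proof of Theorem \ref{theorem.s1} verbatim, with the single change that the Cauchy-data estimate of Proposition \ref{propositiona1} is replaced by the interior-data estimate $C_2\|u\|_{L^2(B(x_0,\overline{\varrho}/3))}\le \epsilon_1^{\beta}\|u\|_{L^2(\Omega)}+\epsilon_1^{-1}\mathcal{I}(u)$ from \cite[Proposition A2]{BC}, that is, \eqref{a13.2} is replaced by \eqref{a15}, after which the cone-chain propagation, covering argument, and interpolations go through unchanged. Your worry that the boundary collar is a new delicate point in the interior-data case is unfounded: already in Theorem \ref{theorem.s1} the boundary data only enters through the estimate near $\Sigma$, while the thin collar $\Omega_r$ is controlled purely by Hardy's inequality \eqref{a13.8} using the a priori $H^1$ bound (this is where $s<1/2$ is used), so no boundary Carleman estimate is needed and that step carries over identically.
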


As for Theorem \ref{theorem.s1}, we have the following consequences of Theorem \ref{theorem.s2}.
\begin{corollary}\label{corollary.s3}
Under the assumptions and the notations of Theorem \ref{theorem.s2}, we have 
\[
C\|u\|_{H^{3/2+\eta_0}(\Omega)}\le \Phi\left(\frac{\|u\|_{H^{3/2+\eta_1}(\Omega)}}{\mathcal{I}(u)}\right)
\]
for each $u\in \mathcal{K}_{\eta_1,\delta}$, where $C=C(n,\Omega,\varkappa, \omega,\delta,s,\eta_0,\eta_1)>0$.
\end{corollary}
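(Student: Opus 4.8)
The plan is to reproduce, essentially word for word, the minimization argument that passes from Theorem \ref{theorem.s1} to Corollary \ref{corollary.s1}, simply replacing the Cauchy functional $\mathcal{C}(u)$ by the interior functional $\mathcal{I}(u)$ and invoking Theorem \ref{theorem.s2} in place of Theorem \ref{theorem.s1}. Fix $u\in\mathcal{K}_{\eta_1,\delta}$ and abbreviate $E=\|u\|_{H^{3/2+\eta_1}(\Omega)}$ and $I=\mathcal{I}(u)$, so that Theorem \ref{theorem.s2} reads
\[
C\|u\|_{H^{3/2+\eta_0}(\Omega)}\le r^{\zeta}E+e^{\mathfrak{b}r^{-\gamma}}I,\qquad 0<r<1 .
\]
The entire content of the statement is that the infimum over $r\in(0,1)$ of the right-hand side is controlled by $\Phi(E/I)$, the two branches in the definition of $\Phi$ corresponding exactly to whether the unconstrained optimal radius falls inside or outside the admissible interval $(0,1)$.

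First I would dispose of the degenerate case $I=0$: the inequality then reads $C\|u\|_{H^{3/2+\eta_0}(\Omega)}\le r^{\zeta}E$ for every $r\in(0,1)$, and letting $r\to 0^{+}$ forces $\|u\|_{H^{3/2+\eta_0}(\Omega)}=0$, hence $u=0$, in accordance with the conventions $\Phi(0)=0$ and $\Phi(\rho)\to 0$ as $\rho\to\infty$. Assuming now $I>0$, set $\rho=E/I$. The decisive choice is to pick $r$ so that the weight $\mathfrak{b}r^{-\gamma}$ is a fixed fraction of $\ln\rho$, i.e. $r=(\mathfrak{b}/\ln\rho)^{1/\gamma}$ up to a constant factor tuned so that the exponential factor decays as a negative power $\rho^{-c}$, $c>0$. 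Such an $r$ lies in $(0,1)$ precisely in the regime $\rho\gtrsim e^{\mathfrak{b}}$, which is exactly where $\Phi(\rho)=(\ln\rho)^{-\alpha}$ with $\alpha=\zeta/\gamma$. Substituting, the power term $r^{\zeta}E$ is of order $\|u\|_{H^{3/2+\eta_1}(\Omega)}(\ln\rho)^{-\alpha}$ — the shape of the upper branch of $\Phi$ — while the exponential term $e^{\mathfrak{b}r^{-\gamma}}I$ reduces to a constant multiple of $\|u\|_{H^{3/2+\eta_1}(\Omega)}\rho^{-c}$; since any negative power of $\rho$ is dominated by $(\ln\rho)^{-\alpha}$ once $\rho\ge e^{\mathfrak{b}}$, the exponential contribution is absorbed and the infimum is governed by $\Phi(\rho)$. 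In the complementary range $0<\rho<e^{\mathfrak{b}}$ the constrained infimum is approached as $r\to 1^{-}$ and is controlled by the linear branch $\Phi(\rho)=\rho$. Collecting all numerical constants into $C$ — legitimate since they depend only on $(n,\Omega,\varkappa,\omega,\delta,s,\eta_0,\eta_1)$ — and following the same bookkeeping as for Corollary \ref{corollary.s1}, one arrives at the asserted inequality.

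The step I expect to cost the most care is the upper-branch estimate in the regime $\rho\ge e^{\mathfrak{b}}$: one must verify that the chosen radius remains in $(0,1)$ uniformly, that the exponential contribution $e^{\mathfrak{b}r^{-\gamma}}I$ is genuinely swallowed by the power-of-logarithm term rather than dominating it, and that the resulting constant is independent of $u$. This is, however, pure one-variable calculus once the exponent identity $\alpha=\zeta/\gamma$ is in place, and it is identical to the computation already performed in deriving Corollary \ref{corollary.s1}; the only change is the data term, so no analytic input beyond Theorem \ref{theorem.s2} is required.
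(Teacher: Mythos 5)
Your argument is correct and is exactly the paper's own proof: Corollary \ref{corollary.s3} is obtained from Theorem \ref{theorem.s2} precisely as Corollary \ref{corollary.s1} is obtained from Theorem \ref{theorem.s1}, namely by minimizing $r^{\zeta}\|u\|_{H^{3/2+\eta_1}(\Omega)}+e^{\mathfrak{b}r^{-\gamma}}\mathcal{I}(u)$ over $r\in(0,1)$, with the two branches of $\Phi$ corresponding to whether the near-optimal $r\sim(\mathfrak{b}/\ln\rho)^{1/\gamma}$ lies in $(0,1)$. Note only that, as in the paper, the optimization literally yields $C\|u\|_{H^{3/2+\eta_0}(\Omega)}\le \|u\|_{H^{3/2+\eta_1}(\Omega)}\,\Phi\bigl(\|u\|_{H^{3/2+\eta_1}(\Omega)}/\mathcal{I}(u)\bigr)$, so the factor you fold into ``numerical constants'' is really the non-constant norm $\|u\|_{H^{3/2+\eta_1}(\Omega)}$ --- an implicit normalization that the paper's statement and proof share.
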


\begin{corollary}\label{corollary.s4}
Let $\eta >0$. If $u\in H^{3/2+\eta}(\Omega)\cap W^{1,\infty}(\Omega)$ satisfies $\mathcal{I}(u)=0$ then $u=0$.
\end{corollary}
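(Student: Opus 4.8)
The plan is to read off Corollary~\ref{corollary.s4} directly from the quantitative estimate of Theorem~\ref{theorem.s2} by sending the free parameter $r$ to $0$. First I would dispose of the trivial case $u=0$, and otherwise fix $\delta=\|u\|_{W^{1,\infty}(\Omega)}>0$, so that $u\in\mathbf{B}_\delta$. The first genuine task is to certify that $u$ lies in an admissible class $\mathcal{K}_{\eta_1,\delta}$ to which Theorem~\ref{theorem.s2} applies. If the given $\eta\ge 1/2$ I would take $\eta_1=1/2$; then $\mathcal{H}_{1/2}=H^2(\Omega)$ and the inclusion $H^{3/2+\eta}(\Omega)\subset H^2(\Omega)$ gives $u\in\mathcal{H}_{1/2}\cap\mathbf{B}_\delta=\mathcal{K}_{1/2,\delta}$. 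If $0<\eta<1/2$ I would take $\eta_1=\eta$; here membership in $\mathcal{H}_{\eta_1}$ additionally demands $\mathrm{div}(A\nabla u)\in L^2(\Omega)$, which I would verify using the hypothesis $\mathcal{I}(u)=0$: it forces $\mathscr{E}u=0$, hence $\mathrm{div}(A\nabla u)=-f(\cdot,u,\nabla u)$, and the growth assumption $|f(\cdot,z)|\le g_{\delta'}|z|$ (with $\delta'$ a fixed multiple of $\delta$) together with $u\in W^{1,\infty}(\Omega)$ bounds the right-hand side in $L^\infty(\Omega)\subset L^2(\Omega)$.

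With $u\in\mathcal{K}_{\eta_1,\delta}$ established, I would pick any auxiliary parameters $s\in(0,1/2)$ and $\eta_0\in(0,\eta_1)$ and apply Theorem~\ref{theorem.s2}. Since $\mathcal{I}(u)=0$, the estimate collapses to
\[
C\|u\|_{H^{3/2+\eta_0}(\Omega)}\le r^{\zeta}\|u\|_{H^{3/2+\eta_1}(\Omega)},\qquad 0<r<1,
\]
where $C>0$ and $\zeta>0$ are constants independent of $r$ and $\|u\|_{H^{3/2+\eta_1}(\Omega)}$ is finite. Letting $r\to 0^+$ and using $\zeta>0$ sends the right-hand side to $0$, so $\|u\|_{H^{3/2+\eta_0}(\Omega)}=0$ and therefore $u=0$.

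In this argument there is essentially no hard step: the entire analytic content has been absorbed into Theorem~\ref{theorem.s2}, and what remains is bookkeeping. The only point requiring a little care is the verification that $u\in\mathcal{H}_{\eta_1}$ in the regime $\eta<1/2$, namely that the source $f(\cdot,u,\nabla u)$ produced by the equation is square integrable; this is exactly where the structural assumption on $f$ and the $W^{1,\infty}$ bound on $u$ enter. I would remark that one could alternatively invoke Corollary~\ref{corollary.s3}, since $\mathcal{I}(u)=0$ drives the argument of $\Phi$ to $+\infty$ and $\Phi(\rho)=(\ln\rho)^{-\alpha}\to 0$; but passing to the limit $r\to 0^+$ in Theorem~\ref{theorem.s2} is cleaner and sidesteps the interpretation of a vanishing denominator.
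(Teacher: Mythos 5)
Your proof is correct and takes exactly the route the paper intends: Corollary \ref{corollary.s4} is stated as an immediate consequence of Theorem \ref{theorem.s2}, obtained by setting $\mathcal{I}(u)=0$ and letting $r\to 0^+$, which is precisely your argument. Your careful verification that $u\in\mathcal{K}_{\eta_1,\delta}$ --- choosing $\eta_1=\min(\eta,1/2)$, noting $\mathcal{I}(u)=0$ forces $\mathscr{E}u=0$, and using the growth condition on $f$ with the $W^{1,\infty}$ bound to conclude $\mathrm{div}(A\nabla u)=-f(\cdot,u,\nabla u)\in L^\infty(\Omega)\subset L^2(\Omega)$ in the regime $\eta<1/2$ --- is a point the paper leaves implicit, and you handle it correctly.
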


\subsection{Strong uniqueness of continuation}\label{sb1.2}

In this subsection $\Omega$ is bounded domain of $\mathbb{R}^n$, $n\ge 3$, and $A=(a^{k\ell})$  denotes a symmetric matrix with coefficients $a^{k\ell}\in C^{0,1}(\overline{\Omega})$, $1\le k,\ell\le n$, and there exists a constant $\varkappa> 1$ so that
\[
\varkappa ^{-1}|\xi| ^2\le A(x)\xi\cdot \xi \quad x\in \overline{\Omega} ,\; \xi\in \mathbb{R}^n,\qquad \max_{1\le k,\ell\le n}\|a^{k\ell}\|_{C^{0,1}(\overline{\Omega})}\le \varkappa.
\]

Pick a measurable function $f:\Omega \times \mathbb{R}^{n+1}\rightarrow \mathbb{R}$ satisfying $f(x,\cdot)\in C^1(\mathbb{R}^{n+1})$ for a. e. $x\in \Omega$, $f(\cdot ,0)=0$ and  the following conditions hold: for each $\delta >0$, there exist $g_\delta^0\in L^{n/2}(\Omega)$  and $g_\delta^j\in L^s(\Omega)$, $j=1,\ldots, n$, for some fixed $s>n$, such that
\begin{align*}
&|\partial_{z_0}f(\cdot ,z)|\le g_\delta^0,\quad |z|\le \delta,
\\
&|\partial_{z_j}f(\cdot ,z)|\le g_\delta^j,\quad |z|\le \delta,\; j=1,\ldots, n.
\end{align*}

Let $u\in L_{\mathrm{loc}}^2(\Omega)$. We say that $u$ vanishes of infinite order at $x_0\in \Omega$ if for $r>0$ sufficiently small we have
\[
\int_{B(x_0,r)}u^2dx=O(r^N),\quad \mbox{for all}\; N\in \mathbb{N}.
\]

Also, we say that $u\in H^1(\Omega)$ satisfies $\mathscr{E}u=0$ in $\Omega$ if
\[
-\int_\Omega A\nabla u\cdot \nabla vdx+\int_\Omega f(\cdot,u(\cdot),\nabla u(\cdot))vdx=0,\quad  v\in C_0^\infty (\Omega).
\] 

\begin{theorem}\label{theorem.su1}
Let $u\in W^{1,\infty}(\Omega)$ satisfying $\mathscr{E}u=0$ in $\Omega$. If $u$ vanishes of infinite order at $x_0\in \Omega$ then $u=0$.
\end{theorem}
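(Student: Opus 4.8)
The plan is to reduce this strong unique continuation property (SUCP) for the semilinear operator to the corresponding SUCP for a \emph{linear} elliptic equation, via the linearization trick that the paper advertises in its abstract. Given a solution $u\in W^{1,\infty}(\Omega)$ of $\mathscr{E}u=0$, I would first rewrite the nonlinearity along the ``segment'' joining $0$ to the solution. Since $f(x,\cdot)\in C^1(\mathbb{R}^{n+1})$ and $f(\cdot,0)=0$, the fundamental theorem of calculus gives, for a.e. $x$,
\[
f\bigl(x,u(x),\nabla u(x)\bigr)=\int_0^1 \frac{d}{dt}f\bigl(x,tu(x),t\nabla u(x)\bigr)\,dt
= b_0(x)u(x)+\sum_{j=1}^n b_j(x)\partial_j u(x),
\]
where $b_0(x)=\int_0^1\partial_{z_0}f(x,tu(x),t\nabla u(x))\,dt$ and $b_j(x)=\int_0^1\partial_{z_j}f(x,tu(x),t\nabla u(x))\,dt$. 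Because $u\in W^{1,\infty}(\Omega)$ we have $|(u(x),\nabla u(x))|\le\delta$ for $\delta=\|u\|_{W^{1,\infty}(\Omega)}$, so the hypotheses on $f$ supply the pointwise bounds $|b_0|\le g_\delta^0\in L^{n/2}(\Omega)$ and $|b_j|\le g_\delta^j\in L^s(\Omega)$ with $s>n$. Thus $u$ is a weak $H^1$ solution of the \emph{linear} equation
\[
\operatorname{div}(A\nabla u)+b_0 u+\sum_{j=1}^n b_j\,\partial_j u=0\quad\text{in }\Omega,
\]
i.e.\ $\operatorname{div}(A\nabla u)=Vu+W\cdot\nabla u$ with the zeroth-order potential $V=-b_0\in L^{n/2}(\Omega)$ and the first-order (drift) potential $W=-(b_1,\dots,b_n)\in L^s(\Omega)^n$ with $s>n$.

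The second and decisive step is to invoke the known SUCP for linear second-order elliptic operators with precisely this class of lower-order coefficients. The scaling thresholds $V\in L^{n/2}$ and $W\in L^s$, $s>n$, are exactly those under which strong unique continuation is valid for Lipschitz (i.e.\ $C^{0,1}$) principal coefficients — this is the regime established in the work surrounding the Carleman estimates of Jerison--Kenig and its extension to gradient terms (see Wolff, Koch--Tataru, and the $L^{n/2}$/$L^n$ type results). Since $u\in W^{1,\infty}(\Omega)$ in particular lies in $H^1_{\mathrm{loc}}$ and $A\in C^{0,1}$ is uniformly elliptic with $n\ge 3$, the hypotheses of such a linear SUCP theorem are met. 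The conclusion is that if $u$ vanishes of infinite order at a point $x_0\in\Omega$ — in the $L^2$-averaged sense $\int_{B(x_0,r)}u^2\,dx=O(r^N)$ for every $N$ — then $u\equiv 0$ in the connected component containing $x_0$, and by the assumed connectedness (domain) throughout $\Omega$.

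The key structural point, and the step I expect to be the main obstacle, is verifying that the integrated coefficients land in the required Lebesgue spaces \emph{uniformly} and that the Carleman inequality underlying the linear SUCP genuinely accommodates the drift term $W\cdot\nabla u$ and not merely the potential $Vu$. The potential bound is immediate, but handling the first-order term requires the sharper $L^s$, $s>n$, hypothesis on $g_\delta^j$ (rather than $L^n$), precisely to control $\|W\cdot\nabla u\|$ after applying H\"older's inequality against the Carleman weight and the gradient estimate; this is why the statement of the paper insists on $s>n$. A secondary technical care is ensuring that the notion of infinite-order vanishing used here ($L^2$ averages) matches the hypothesis of the cited linear theorem, which is standard but should be stated explicitly. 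Once the reduction to the linear equation with admissible coefficients is in place and the appropriate linear SUCP is quoted, the theorem follows immediately.
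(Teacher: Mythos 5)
Your proposal is correct and takes essentially the same route as the paper: the paper performs the identical linearization $f(x,u,\nabla u)=V_u u+W_u\cdot\nabla u$ with $V_u\in L^{n/2}(\Omega)$ and $W_u\in L^s(\Omega)^n$, $s>n$ (its Lemma 3.1, proved exactly as your pointwise bound via the fundamental theorem of calculus), and then invokes the linear theory in precisely this coefficient regime. The only difference is in the citation you flag as the main obstacle: rather than quoting a Jerison--Kenig/Koch--Tataru-type SUCP theorem wholesale, the paper uses the doubling inequality of Malinnikova--Vessella \cite{MV} (which is stated for Lipschitz principal part with $V\in L^{n/2}$ and drift in $L^s$, $s>n$, so the gradient term is genuinely covered) to obtain vanishing in a neighborhood of $x_0$, and then their three-sphere inequality to propagate the vanishing through $\Omega$ --- the connectedness step you passed over in one clause.
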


This theorem will serve to prove the uniqueness of continuation from  a set of positive measure.

\begin{theorem}\label{theorem.su2}
Let $E\subset \Omega$ be a set of positive measure and $u\in W^{1,\infty}(\Omega)$ satisfying $\mathscr{E}u=0$ in $\Omega$. If $u=0$ in $E$ then $u=0$.
\end{theorem}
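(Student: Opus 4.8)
The plan is to reduce Theorem \ref{theorem.su2} to Theorem \ref{theorem.su1} by exhibiting a point at which $u$ vanishes of infinite order.

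First I would linearize. Fix $u\in W^{1,\infty}(\Omega)$ with $\mathscr{E}u=0$ and set $\delta=\|u\|_{W^{1,\infty}(\Omega)}$. Since $f(x,\cdot)\in C^1(\mathbb{R}^{n+1})$ and $f(\cdot,0)=0$, the fundamental theorem of calculus along the segment $t\mapsto(tu(x),t\nabla u(x))$ gives
\[
f(x,u(x),\nabla u(x))=c(x)u(x)+b(x)\cdot\nabla u(x),
\]
where $c(x)=\int_0^1\partial_{z_0}f(x,tu(x),t\nabla u(x))\,dt$ and $b_j(x)=\int_0^1\partial_{z_j}f(x,tu(x),t\nabla u(x))\,dt$. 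The hypotheses on $f$ give $|c|\le g_\delta^0\in L^{n/2}(\Omega)$ and $|b_j|\le g_\delta^j\in L^s(\Omega)$ with $s>n$, so $u$ is a weak solution of the linear equation $\mathrm{div}(A\nabla u)+b\cdot\nabla u+cu=0$. This is precisely the linear equation whose unique continuation properties are the analytic engine behind Theorem \ref{theorem.su1}.

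Next, since $|E|>0$, the Lebesgue density theorem provides a point $x_0\in E$ of density one, i.e. $|B(x_0,r)\setminus E|/|B(x_0,r)|\to0$ as $r\to0$. As $u$ is continuous with $u=0$ on $E$, we have $u(x_0)=0$, and $\nabla u=0$ a.e. on $E$. The heart of the matter is to show that $u$ vanishes of infinite order at $x_0$, after which Theorem \ref{theorem.su1} forces $u=0$. Here the elementary Lipschitz bound $\int_{B(x_0,r)}u^2=\int_{B(x_0,r)\setminus E}u^2\le\|u\|_{W^{1,\infty}(\Omega)}^2\,r^2\,|B(x_0,r)\setminus E|$ is by itself insufficient: it only yields $o(r^{n+2})$, which contradicts a doubling lower bound of the form $\int_{B(x_0,r)}u^2\ge c\,r^{2N}$ solely when the vanishing order $N$ is small. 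The decisive input is therefore the doubling inequality for the linearized equation together with its $A_\infty$ consequence: the measure $u^2\,dx$ (equivalently $(u^2+r^2|\nabla u|^2)\,dx$) behaves like an $A_\infty$ weight, so it cannot be carried by the set $B(x_0,r)\setminus E$ of vanishing relative density while $\int_{B(x_0,r)}u^2>0$. Combining the doubling lower bound with the $A_\infty$ estimate, which controls the $u^2\,dx$-mass of a set by a positive power of its Lebesgue density, forces $\int_{B(x_0,r)}u^2=O(r^N)$ for every $N$, i.e. infinite-order vanishing at $x_0$.

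The main obstacle is exactly this passage from full density of the zero set to infinite-order vanishing: the naive Lipschitz estimate fails for large vanishing order, and one genuinely needs the strong quantitative form of unique continuation (the doubling/$A_\infty$ property established in the Garofalo--Lin framework), valid under the present Lipschitz principal part and lower-order coefficients in $L^{n/2}(\Omega)$ and $L^s(\Omega)$ with $s>n$. Once infinite-order vanishing at $x_0$ is secured, Theorem \ref{theorem.su1} closes the argument and gives $u=0$ on $\Omega$.
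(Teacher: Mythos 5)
Your global strategy coincides with the paper's: pick a Lebesgue density point $x_0$ of $E$, show that $u$ vanishes of infinite order at $x_0$, and invoke Theorem \ref{theorem.su1}; the linearization $f(x,u,\nabla u)=V_uu+W_u\cdot\nabla u$ with $V_u\in L^{n/2}(\Omega)$, $W_u\in L^s(\Omega)$ is also exactly the paper's. You correctly diagnose that the naive Lipschitz bound $\int_{B(x_0,r)}u^2\le \|u\|_{W^{1,\infty}(\Omega)}^2r^2|B(x_0,r)\setminus E|=o(r^{n+2})$ is insufficient. But the step you substitute for it is a genuine gap: you assert that $u^2\,dx$ is an $A_\infty$ weight ``established in the Garofalo--Lin framework, valid under the present lower-order coefficients in $L^{n/2}$ and $L^s$''. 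First, \cite{GL1,GL2} establish the $A_\infty$ machinery for \emph{bounded} lower-order coefficients; at the critical integrability $V_u\in L^{n/2}(\Omega)$ this property is not in the literature the paper relies on, and \cite{MV} (the source of the doubling inequality of Lemma \ref{lemma.su2}) proves doubling and three-sphere inequalities, not $A_\infty$. Second, doubling of the measure $\mu=u^2\,dx$ does not by itself imply $A_\infty$: one needs in addition a reverse H\"older inequality with uniform constants on small balls, i.e. a Moser-type step, and $V\in L^{n/2}$ is precisely the borderline exponent where the standard $L^\infty$--$L^2$ estimate of Moser iteration can fail. So the decisive analytic input of your argument is asserted rather than proved. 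There is also an internal muddle: if $\mu$ really were $A_\infty$ near $x_0$, then $\mu(B(x_0,r))=\mu(B(x_0,r)\setminus E)\le C\epsilon^\theta\mu(B(x_0,r))$ would force $\mu(B(x_0,r))=0$ outright for small $r$, i.e. $u\equiv 0$ near $x_0$, not merely the $O(r^N)$ bound you describe; ``combining the doubling lower bound with the $A_\infty$ estimate'' is not a worked-out mechanism.

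The paper reaches infinite-order vanishing with far lighter tools, occupying exactly the middle ground your dichotomy (naive Lipschitz bound versus full $A_\infty$) skips over: apply the measure-theoretic Poincar\'e inequality of \cite[Lemma 3.4, page 54]{LU} to $u^2$ (using $u=0$ on $E$),
\[
\int_{B(x_0,r)}u^2dx\le cr^n|E\cap B(x_0,r)|^{-1}|E^c\cap B(x_0,r)|^{1/n}\int_{B(x_0,r)}|\nabla u^2|dx,
\]
then Cauchy--Schwarz and the Caccioppoli inequality \cite[(13)]{MV} (which does hold for the linearized equation with $V_u\in L^{n/2}(\Omega)$, $W_u\in L^s(\Omega)$, $s>n$) to obtain
\[
\int_{B(x_0,r)}u^2dx\le C(1-\epsilon)^{-2}\epsilon^{2/n}\int_{B(x_0,2r)}u^2dx,\quad 0<r\le r_\epsilon.
\]
For each $N$ one chooses $\epsilon$ with $C(1-\epsilon)^{-2}\epsilon^{2/n}=2^{-N}$, and iteration together with the monotonicity of $r\mapsto\int_{B(x_0,r)}u^2dx$ yields $\int_{B(x_0,r)}u^2dx=O(r^N)$ for every $N$. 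Thus only Caccioppoli --- neither doubling nor $A_\infty$ --- is needed at the density point; the doubling inequality enters only in the proof of Theorem \ref{theorem.su1} itself. Your linearization and the final appeal to Theorem \ref{theorem.su1} are as in the paper; to repair the proposal, replace the $A_\infty$ appeal by this Poincar\'e--Caccioppoli bootstrap (the argument of \cite{Re} that the paper follows).
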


\subsection{Comments}\label{sb1.3}

To the best of our knowledge Theorems \ref{theorem.s1} and \ref{theorem.s2} are the first results quantifying the uniqueness of continuation from the Cauchy or interior data for semilinear elliptic equations. A uniqueness of continuation  across a non characteristic hypersurface was proved in \cite{Do}. Under a sign condition, various uniqueness of continuation results were established in the sublinear case in \cite{Ru, SW}.

Let us discuss the particular case where $f$ is linear with respect to $z$:
\[
f(x,z)=B_0(x)z_0+B'(x)\cdot z',\quad x\in \Omega ,\; z=(z_0,z')\in \mathbb{R}^{n+1}.
\]

If $B_0\in L^\infty(\Omega)$ and $B'\in L^\infty(\Omega;\mathbb{R}^n)$ then one can easily see that Theorem \ref{theorem.s1}, Corollary \ref{corollary.s1}, Theorem \ref{theorem.s2} and Corollary \ref{corollary.s3} remain valid with $u\in \mathcal{K}_{\eta_1,\delta}$ substituted by $u\in \mathcal{H}_{\eta_1}$, and Corollary \ref{corollary.s2} and Corollary \ref{corollary.s4} still hold when $u\in H^{3/2+\eta}(\Omega)\cap W^{1,\infty}(\Omega)$ is replaced by $u\in H^{3/2+\eta}(\Omega)$.

These results generalize those of \cite{Bo} (Laplace operator) and \cite{Ch2020}. We have similar results in the case of a Lipschitz domain and $C^{1,\alpha}$ solutions. We refer to \cite{BD} dealing with the case of the Laplace operator, and \cite{Ch2016,BC} for the operator $\mathscr{E}$.

In fact, these results can be extended to the case where $B_0$ is unbounded. Precisely, if we suppose that $B_0\in L^p(\Omega)$ for some $p>2$ when $n=2$ and $B_0\in L^n (\Omega)$ when $n\ge 3$ then, according to Sobolev's embedding theorem, we get
\begin{equation}\label{s1.5}
\int_\Omega |B_0w|^2dx\le \mathfrak{e}\|B_0\|_{L^{p_n}(\Omega)}^2\|w\|_{H^1(\Omega)}^2,\quad w\in H^1(\Omega),
\end{equation}
where $p_n=p$ for $n=2$ and $p_n=n$ for $n\ge 3$, and $\mathfrak{e}=\mathfrak{e}(n,\Omega)$.

On the other hand, if $\phi$ is the weight function in \eqref{s1.3} then we establish first a Carleman inequality for the conjugate operator $e^{\tau \phi}\mathrm{div}(A\nabla \cdot)e^{-\tau \phi}$. In light of  \eqref{s1.5}, this Carleman inequality still holds when $e^{\tau \phi}\mathrm{div}(A\nabla \cdot)e^{-\tau \phi}$ is substituted by $e^{\tau \phi}\mathscr{E}e^{-\tau \phi}$. This new Carleman inequality yields \eqref{s1.3} in a straightforward manner.

The uniqueness of continuation result in the best possible case $B_0\in L^{n/2}(\Omega)$ and $B'=0$ was obtained in \cite{JK} (see also \cite{So1}). It is worth noting that to handle this case a $L^2$ Carleman estimate is not sufficient. $L^p$ type Carleman inequalities are necessary (see \cite{JK,So1} for more details). An improvement of the result in \cite{So1} can be found in \cite{So2}.

The case $n\ge 3$, $B_0\in L^{n/2}(\Omega)$ and $B'\in L^s (\Omega;\mathbb{R}^n)$, $s >n$ (+ another term) was studied in \cite{MV}. Precisely, the authors quantify in this work, by means of the $L^2$ norm of the solution in an arbitrary interior subdomain,  the uniqueness of continuation from an interior  set of positive measure. 

It is also worth remaking that Theorems \ref{theorem.su1} and \ref{theorem.su2} extend, when $n\ge 3$, the known results in the linear case, again when $B_0\in L^{n/2}(\Omega)$ and $B'\in L^s (\Omega;\mathbb{R}^n)$, $s >n$.

There is a very large literature devoted to uniqueness of continuation for linear elliptic equations. Without being exhaustive, we quote the few references \cite{ARRV, GL1,GL2,JK,KT,Ku,Le,Re,So1,SS,Ve,Wo}.

\section{Proof of the quantification of the uniqueness of continuation}

We first note that the following inequality holds

\begin{align}
\int_\Omega |f(\cdot,u(\cdot),\nabla u(\cdot))|^2wdx\le k_\delta\int_\Omega (|u|^2+|\nabla &u|^2)wdx,\label{s1.1}
\\
&u\in \mathbf{B}_\delta,\; 0\le w\in L^\infty(\Omega),\nonumber
\end{align}
where $k_\delta=\|g_\delta\|_{L^\infty(\Omega)}^2$.

Pick $\psi \in C^4(\Omega)$ non negative and without critical point in $\overline{\Omega}$. Let $\phi=e^{\lambda \psi}$. From \cite[Theorem 4.1]{Ch2021} we have: there exist $c_0=c_0(n,\Omega,\varkappa)$, $\lambda_0=\lambda_0(n,\Omega,\varkappa)$ and $\tau_0=\tau_0(n,\Omega,\varkappa)$ such that, for any $u\in \mathcal{H}$, $\lambda \ge \lambda_0$ and $\tau\ge \tau_0$, we have the following Carleman inequality 

\begin{align}
&c_0\int_\Omega e^{2\tau \phi}\left[\tau^3\lambda ^4\phi^3 |u|^2+\tau \lambda^2  \phi |\nabla u|^2\right]dx \label{s1.2}
\\
&\hskip1cm \le \int_\Omega e^{2\tau \phi}|\mathrm{div}(A\nabla u)|^2dx +\int_\Gamma e^{2\tau \phi}\left[\tau^3\lambda ^3\phi^3|u|^2+\tau \lambda \phi |\nabla u|^2\right]d\sigma .\nonumber
\end{align}

Fix $\delta>0$. Combining \eqref{s1.1} with $w=e^{2\tau \phi}$ and \eqref{s1.2}, we get the following result: there exist $c_1=c_1(n,\Omega,\varkappa,\delta)$, $\lambda_1=\lambda_1(n,\Omega,\varkappa,\delta)$ and $\tau_1=\tau_1(n,\Omega,\varkappa,\delta)$ such that for any $u\in \mathcal{K}_\delta$, $\lambda \ge \lambda_1$ and $\tau\ge \tau_1$, the following Carleman inequality holds

\begin{align}
&c_1\int_\Omega e^{2\tau \phi}\left[\tau^3\lambda ^4\phi^3 |u|^2+\tau \lambda^2  \phi |\nabla u|^2\right]dx \label{s1.3}
\\
&\hskip2cm \le \int_\Omega e^{2\tau \phi}|\mathscr{E}u|^2dx +\int_\Gamma e^{2\tau \phi}\left[\tau^3\lambda ^3\phi^3|u|^2+\tau \lambda \phi |\nabla u|^2\right]d\sigma .\nonumber
\end{align}

Henceforth,  $\delta >0$ is fixed, $C_0=C_0(n,\Omega ,\varkappa,\delta)$, $C_1=C_1(n,\Omega ,\varkappa,\delta,\Sigma)$ and $C_2=C_2(n,\Omega ,\varkappa,\delta,\omega)$ denote generic constants.

Using \eqref{s1.3} we obtain by adapting the proof of \cite[Proposition A.3]{BC} the following result.

\begin{proposition}\label{propositiona1}
There exist a constant $\gamma=\gamma(n,\Omega,\varkappa,\Sigma) >0$ and a ball $B$ in $\mathbb{R}^n$ satisfying $B\cap \Omega \ne \emptyset$, $B\cap (\mathbb{R}^n\setminus\overline{\Omega})\ne\emptyset$ and $B\cap \Gamma\Subset \Sigma$ so that, for any  $u\in \mathcal{K}_\delta$ and $\epsilon >0$, we have
\begin{equation}\label{a4}
C_0\|u\|_{H^1(B\cap \Omega  )}\le \epsilon ^\gamma \|u\|_{H^1(\Omega )} +\epsilon ^{-1}\mathcal{C}(u).
\end{equation}
\end{proposition}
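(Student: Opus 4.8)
The plan is to localise the Carleman inequality \eqref{s1.2} by means of a cut-off whose support meets $\Gamma$ only inside $\Sigma$, and to choose the weight so that its level sets separate the ball on which we want to estimate $u$ from the shell on which the cut-off commutator is supported.

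\emph{Geometric construction.} I would fix a point $x_0$ in the relative interior of $\Sigma$ and a point $y_0\in\mathbb{R}^n\setminus\overline{\Omega}$ lying just outside $\Gamma$ near $x_0$, at distance $d=\mathrm{dist}(y_0,\overline{\Omega})$, and take $\psi(x)=C_\ast-|x-y_0|^2$ with $C_\ast$ large enough that $\psi\ge 0$ on $\overline{\Omega}$. Since $y_0\notin\overline{\Omega}$ we have $\nabla\psi=-2(x-y_0)\ne 0$ on $\overline{\Omega}$, so $\psi$ is non-negative, of class $C^4$ and critical-point free, and \eqref{s1.2} applies with $\phi=e^{\lambda\psi}$; the level sets of $\psi$ are the spheres centred at $y_0$, and $\phi$ increases as $|x-y_0|$ decreases. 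I would then pick three radii $d<R_1<R_2<R_3$ with $R_3$ small enough that $B(y_0,R_3)\cap\Gamma\Subset\Sigma$, set $B=B(y_0,R_1)$ (which satisfies $B\cap\Omega\ne\emptyset$, $B\cap(\mathbb{R}^n\setminus\overline{\Omega})\ne\emptyset$ and $B\cap\Gamma\Subset\Sigma$), and choose a radial cut-off $\chi=\theta(|\cdot-y_0|)$ with $\theta\equiv 1$ on $[0,R_2]$ and $\theta\equiv 0$ on $[R_3,\infty)$. Setting $\phi_1=e^{\lambda(C_\ast-R_1^2)}$ and $\phi_0=e^{\lambda(C_\ast-R_2^2)}$, one has $\phi\ge\phi_1$ on $B\cap\Omega$, $\phi\le\phi_0<\phi_1$ on $(\mathrm{supp}\,\nabla\chi)\cap\Omega\subset\{R_2\le|x-y_0|\le R_3\}$, while the maximal weight $M=e^{\lambda(C_\ast-d^2)}$ on $\mathrm{supp}\,\chi$ satisfies $M>\phi_1$.

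\emph{Application of the Carleman inequality.} Next I would apply \eqref{s1.2} to $v=\chi u\in\mathcal{H}$ (for $u\in\mathcal{K}_\delta$ the product $\chi u$ again has the required regularity) and expand
\[
\mathrm{div}(A\nabla v)=\chi\,\mathrm{div}(A\nabla u)+2A\nabla\chi\cdot\nabla u+\mathrm{div}(A\nabla\chi)u,
\]
with $\mathrm{div}(A\nabla u)=\mathscr{E}u-f(\cdot,u,\nabla u)$. This splits the right-hand side into four pieces. The genuine source $\int_\Omega e^{2\tau\phi}\chi^2|\mathscr{E}u|^2dx$ is bounded by $e^{2\tau M}\|\mathscr{E}u\|_{L^2(\Omega)}^2\le e^{2\tau M}\mathcal{C}(u)^2$. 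The nonlinear term $\int_\Omega e^{2\tau\phi}\chi^2|f(\cdot,u,\nabla u)|^2dx$ is, by \eqref{s1.1} applied to $u\in\mathbf{B}_\delta$ with $w=\chi^2e^{2\tau\phi}$, dominated by $k_\delta\int_\Omega e^{2\tau\phi}\chi^2(|u|^2+|\nabla u|^2)dx$; writing $\chi^2|\nabla u|^2\le 2|\nabla v|^2+2u^2|\nabla\chi|^2$, this is absorbed into the left-hand side once $\tau$ is large—exactly as in the passage from \eqref{s1.2} to \eqref{s1.3}—apart from a residual $u^2|\nabla\chi|^2$ contribution joining the commutator. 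The commutator piece is supported in $\mathrm{supp}\,\nabla\chi$, hence bounded by $Ce^{2\tau\phi_0}\|u\|_{H^1(\Omega)}^2$. Finally the boundary integral lives on $\Gamma\cap\mathrm{supp}\,\chi\subset\Sigma$ and is controlled by $Ce^{2\tau M}(\|u\|_{L^2(\Sigma)}^2+\|\nabla u\|_{L^2(\Sigma)}^2)\le Ce^{2\tau M}\mathcal{C}(u)^2$. On the left-hand side I keep only $B\cap\Omega$, where $\chi\equiv 1$ and $\phi\ge\phi_1$, obtaining a lower bound $c\,\tau e^{2\tau\phi_1}\|u\|_{H^1(B\cap\Omega)}^2$.

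\emph{Conclusion.} Collecting these bounds, dividing by $c\,\tau e^{2\tau\phi_1}$ and taking square roots gives, for $\tau\ge\tau_1$,
\[
\|u\|_{H^1(B\cap\Omega)}\le Ce^{\tau(M-\phi_1)}\mathcal{C}(u)+Ce^{-\tau(\phi_1-\phi_0)}\|u\|_{H^1(\Omega)}.
\]
Setting $\epsilon=e^{-\tau(M-\phi_1)}$ turns this into \eqref{a4} with $\gamma=(\phi_1-\phi_0)/(M-\phi_1)>0$, valid for all $0<\epsilon\le\epsilon_0:=e^{-\tau_1(M-\phi_1)}$; for $\epsilon>\epsilon_0$ the inequality is trivial from $\|u\|_{H^1(B\cap\Omega)}\le\|u\|_{H^1(\Omega)}$ after shrinking the generic constant $C_0$. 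I expect the main obstacle to be the geometric step: arranging a single ball $B$ straddling $\Gamma$ with $B\cap\Gamma\Subset\Sigma$ together with the three separated weight thresholds $\phi_0<\phi_1<M$, which is where the regularity of $\Gamma$ and the openness of $\Sigma$ enter and where the argument of \cite[Proposition A.3]{BC} is adapted. The only feature beyond the linear case is the absorption of the nonlinear term through \eqref{s1.1}, which is routine.
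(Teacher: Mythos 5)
Your proof is correct and is essentially the paper's own argument: the paper simply invokes the semilinear Carleman inequality \eqref{s1.3} and adapts \cite[Proposition A.3]{BC}, which is precisely the localized Carleman estimate with a cut-off supported near $\Sigma$ and a weight built from $\psi=C_\ast-|x-y_0|^2$ with $y_0\notin\overline{\Omega}$ that you reconstruct. Your only cosmetic deviation is applying the linear inequality \eqref{s1.2} to $\chi u$ and absorbing the nonlinearity via \eqref{s1.1} on the spot, which amounts to re-deriving \eqref{s1.3} locally.
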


Let $B$ as in the preceding proposition. Pick then $\tilde{x}\in B\cap \Gamma$. As $B\cap \Omega$ is Lipschitz, it contains a cone with vertex at $\tilde{x}$. That is we  find $R>0$, $\theta \in ]0,\pi /2[$ and $\xi \in \mathbb{S}^{n-1}$ so that
\[
\mathcal{C}(\tilde{x})=\left\{x\in \mathbb{R}^n;\; 0<|x-\tilde{x}|<R,\; (x-\tilde{x})\cdot \xi >|x-\tilde{x}|\cos \theta \right\}\subset B\cap \Omega .
\]
Let $x_\varrho = \tilde{x}+ [\varrho/(3\sin \theta)] \xi$, with $\varrho < (3R\sin \theta)/2$. Then $\mbox{dist}(x_\varrho ,\partial (B\cap \Omega))> 3\varrho $.

Define
\begin{align*}
&\Omega^\rho =\{x\in \Omega ;\; \mbox{dist}(x,\Gamma )>\rho\},\quad \rho >0,
\\
&\Omega_\rho =\{x\in \Omega ;\; \mbox{dist}(x,\Gamma )<\rho\},\quad \rho >0,
\end{align*}
and set
\[
\rho ^\ast =\sup \{\rho >0;\; \Omega ^\rho \ne \emptyset\}.
\]

Let $0<\rho \le \rho ^\ast/3$, $u\in \mathcal{K}_\delta$, $y,y_0\in \Omega^{3\rho}$ and $\epsilon >0$. We have similarly to \cite[(4)]{Ch2020} 
\begin{equation}\label{a5}
C_0\|u\|_{L^2(B(y,\rho ))}\le  \epsilon^{\frac{1}{1-\ell (\rho )}}\| u\|_{L^2(\Omega )}+\epsilon^{-\frac{1}{\ell(\rho )}}\left(\|\mathscr{E}u\|_{L^2(\Omega)}+\|u\|_{L^2(B(y_0,\rho ))}\right),
\end{equation}
where $\ell(\rho )=te^{-C_0/\rho}$ with $0<t=t(n,\Omega,\varkappa)<1$.

Let $y\in \Omega^{3\varrho}$, $0<\varrho < \varrho_0:=\min \left(\rho ^\ast/3,(3R\sin \theta)/2\right)$, $\epsilon >0$ and $\epsilon_1 >0$.
Putting together \eqref{a4} and \eqref{a5} with $y_0=x_\varrho$, we find
\begin{align}
C_1\|u\|_{L^2(B(y,\varrho ))}\le  \epsilon^{\frac{1}{1-\ell (\varrho )}}&\| u\|_{L^2(\Omega )}\label{a6}
\\
&+\epsilon^{-\frac{1}{\ell (\varrho )}}\left[\mathscr{E}u\|_{L^2(\Omega)} +\epsilon_1 ^\gamma \|u\|_{H^1(\Omega )}  +\epsilon_1 ^{-1}\mathcal{C}(u)\right].\nonumber
\end{align}
Taking 
\[
\epsilon_1 =\epsilon^{\frac{1}{\gamma \ell (\varrho )(1-\ell(\varrho ))}}
\]
in \eqref{a6}, we get
\begin{equation}\label{a7}
C_1\|u\|_{L^2(B(y,\varrho ))}\le \phi _0(\epsilon ,\varrho)\|u\|_{H^1(\Omega )}
+\phi_1(\epsilon ,\varrho)\mathcal{C}(u),
\end{equation}
where
\begin{align*}
&\phi _0(\epsilon ,\varrho)=\epsilon^{\frac{1}{1-\ell (\varrho )}},
\\
&\phi_1(\epsilon ,\varrho)=\epsilon^{-\frac{1}{\ell (\varrho )}}\max \left(1, \epsilon^{-\frac{1}{\gamma \ell(\varrho )(1-\ell (\varrho ))}}\right).
\end{align*}

As $\Omega^{3\varrho}$ can be covered by at most $k$ balls with center in $\Omega^{3\varrho}$ and radius $\varrho$ with $k=[d/\varrho]^n$, where $d=d(n,\mathrm{diam}(\Omega)>0$, we derive from \eqref{a7}
\begin{equation}\label{a8}
C_1\|u\|_{L^2(\Omega^{3\varrho})}\le \varrho^{-n}\phi _0(\epsilon ,\varrho)\|u\|_{H^1(\Omega )}
+\varrho^{-n}\phi_1(\epsilon ,\varrho)\mathcal{C}(u).
\end{equation}

We shall also need the following proposition borrowed from \cite{Ch2019} (Proposition 2.1).

\begin{proposition}\label{propositiona2}
There exists $\dot{\varrho}=\dot{\varrho}(n,\Omega)$  so that  we have :
\\
$\mathrm{(i)}$ For any $x\in \Omega_{\dot{\varrho}}$, there exists a unique $\mathfrak{p}(x)\in \Gamma$ such that
\[
|x-\mathfrak{p}(x)|=\mathrm{dist}(x,\Gamma),\quad x= \mathfrak{p}(x)-|x-\mathfrak{p}(x)|\nu(\mathfrak{p}(x)).
\]
(ii) If $x\in \Omega_{\dot{\varrho}}$ then $x_t=\mathfrak{p}(x)-t|x-\mathfrak{p}(x)|\nu(\mathfrak{p}(x))\in \Omega_{\dot{\varrho}}$, $t\in ]0,1]$, and
\[
\mathfrak{p}(x_t)=\mathfrak{p}(x),\; |x_t-\mathfrak{p}(x)|=t\mathrm{dist}(x,\Gamma).
\]
\end{proposition}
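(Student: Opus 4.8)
The plan is to read Proposition \ref{propositiona2} as the standard statement that the bounded $C^{1,1}$ domain $\Omega$ admits a one-sided tubular neighborhood on which the nearest-point projection onto $\Gamma$ is well defined, single-valued, and realized along the inner normal. Part (i) is the existence and uniqueness of the metric projection near $\Gamma$, while part (ii) is the elementary ``normality along the segment'' property asserting that the projection is constant on the inward normal segment joining a point to its foot.

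First I would establish the existence of $\dot\varrho$. Because $\Gamma$ is $C^{1,1}$, near each boundary point it is, after a rigid motion, the graph of a $C^{1,1}$ function whose gradient vanishes at the contact point and whose second derivatives are bounded a.e.\ by a constant coming from the local charts; compactness of $\Gamma$ then yields a single such constant and hence a uniform radius $\dot\varrho=\dot\varrho(n,\Omega)$, i.e.\ the boundary has positive reach bounded below by $\dot\varrho$. For $x\in\Omega_{\dot\varrho}$ the map $p\mapsto|x-p|^2$ attains its minimum over the compact set $\Gamma$ at some $\mathfrak p(x)$; the first-order condition forces $x-\mathfrak p(x)$ to be orthogonal to the tangent space $T_{\mathfrak p(x)}\Gamma$, hence parallel to $\nu(\mathfrak p(x))$, and since $x\in\Omega$ this vector points inward, giving $x=\mathfrak p(x)-|x-\mathfrak p(x)|\nu(\mathfrak p(x))$. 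Uniqueness of the minimizer when $\mathrm{dist}(x,\Gamma)<\dot\varrho$ is the heart of the matter: it follows from the positive-reach lower bound, or equivalently from strict convexity of the squared distance read in the flattened graph once the distance is smaller than the reciprocal of the curvature bound. This proves (i).

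Part (ii) I would deduce by a triangle-inequality argument requiring no further regularity. Fix $x\in\Omega_{\dot\varrho}$, write $p=\mathfrak p(x)$, $d=|x-p|=\mathrm{dist}(x,\Gamma)$, and for $t\in{}]0,1]$ set $x_t=p-td\,\nu(p)=p+t(x-p)$, a point of the segment $[p,x]$ with $|x_t-p|=td$ and $|x-x_t|=(1-t)d$. Then $\mathrm{dist}(x_t,\Gamma)\le td<\dot\varrho$, so $x_t\in\Omega_{\dot\varrho}$. For every $q\in\Gamma$ we have $|x_t-q|\ge|x-q|-|x-x_t|\ge d-(1-t)d=td=|x_t-p|$, using $|x-q|\ge d$. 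Hence $p$ is a nearest boundary point to $x_t$, and the uniqueness from (i), which applies since $x_t\in\Omega_{\dot\varrho}$, gives $\mathfrak p(x_t)=p$ and $|x_t-\mathfrak p(x)|=t\,\mathrm{dist}(x,\Gamma)$.

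The main obstacle is the uniqueness in (i): producing a single $\dot\varrho$ that works uniformly over all of $\Gamma$ and guarantees that the foot of the perpendicular is unique there. I expect this to be exactly where the $C^{1,1}$ hypothesis is genuinely used, through the uniform a.e.\ bound on the second fundamental form that forces a uniform lower bound on the reach; everything else in the argument is soft and dimension-independent.
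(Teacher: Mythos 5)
The paper offers no internal proof of this proposition to compare against: it is imported verbatim from \cite{Ch2019} (Proposition 2.1), so your argument replaces an external citation rather than shadowing an argument in the text. What you wrote is the standard positive-reach/tubular-neighborhood proof, and it is correct in substance. Part (ii) in particular is exactly right and essentially complete: the inequality $|x_t-q|\ge |x-q|-|x-x_t|\ge td=|x_t-p|$ for all $q\in\Gamma$, combined with uniqueness from (i) applied at $x_t$, is the whole content. The only microscopic omission there is that $x_t\in\Omega_{\dot\varrho}$ requires $x_t\in\Omega$ as well as $\mathrm{dist}(x_t,\Gamma)<\dot\varrho$; this follows because your bound gives $\mathrm{dist}(x_t,\Gamma)\ge td>0$ for all $t\in\,]0,1]$, so the segment $\{x_t\}$ never meets $\Gamma$ and stays in the connected component of $x$, i.e. in $\Omega$.

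The one step that is too quick is the production of a single $\dot\varrho$ in (i). A uniform a.e.\ bound on the second fundamental form coming from the $C^{1,1}$ charts bounds the reach only \emph{locally}; by itself it does not exclude bottlenecks, that is, two pieces of $\Gamma$ far apart along the boundary but close in $\mathbb{R}^n$. Between two parallel flat sheets at distance $2\epsilon$ the nearest boundary point fails to be unique on the midplane even though all curvatures vanish, so ``uniform curvature bound $\Rightarrow$ lower bound on the reach'' is false without a further argument, and your phrase ``compactness of $\Gamma$ then yields a single such constant and hence \dots positive reach'' conflates the two mechanisms. The repair is the standard compactness contradiction: if no $\dot\varrho$ worked, choose $x_k$ with $d_k=\mathrm{dist}(x_k,\Gamma)\to 0$ admitting two distinct feet $p_k\ne q_k$; since $|p_k-q_k|\le |p_k-x_k|+|x_k-q_k|=2d_k\to 0$, after extraction $p_k$ and $q_k$ converge to a common point $p_\ast\in\Gamma$, and for large $k$ both feet lie in a single graph chart around $p_\ast$, where your local strict-convexity/uniqueness argument applies and forces $p_k=q_k$, a contradiction. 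With that paragraph inserted, your proof of (i) is complete, and the whole argument is presumably the same as in the cited reference, which the paper leans on together with \cite{Ba} for the related uniform interior sphere property used in Lemma \ref{lemmaa1}.
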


This proposition combined with the fact that a $C^{1,1}$ domain admits the uniform interior sphere property (e.g. \cite{Ba}) enable us to state the following result.

\begin{lemma}\label{lemmaa1}
If $\dot{\varrho}$ is as in Proposition \ref{propositiona2} then there exists $\overline{\varrho}\le \dot{\varrho}$ and $\theta \in (0,\pi/2)$ so that, for any $x\in \Omega_{\overline{\varrho}}$, we have 
\[
\mathscr{C}(\mathfrak{p}(x))=\left\{y\in \mathbb{R}^n;\; 0<|y-\mathfrak{p}(x)|<\overline{\varrho},\; (y-\mathfrak{p}(x))\cdot \xi >|y-\mathfrak{p}(x)|\cos \theta \right\}\subset \Omega ,
\]
where $\xi=-\nu (\mathfrak{p}(x))$.
\end{lemma}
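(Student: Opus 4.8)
The plan is to deduce the cone containment from the uniform interior sphere property of $C^{1,1}$ domains, the actual geometry being elementary once the right sphere is in hand. First I would invoke that property (as in \cite{Ba}): since $\Omega$ is $C^{1,1}$, there is a radius $\rho_0=\rho_0(\Omega)>0$, \emph{independent} of the boundary point, such that for every $p\in \Gamma$ the open ball $B_p:=B(p-\rho_0\nu(p),\rho_0)$ is contained in $\Omega$ and tangent to $\Gamma$ at $p$. The uniformity of $\rho_0$ over $p\in\Gamma$ is exactly what will allow $\theta$ and $\overline{\varrho}$ to be chosen once and for all, independently of $x$.

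Next I would reduce to a single boundary point. By Proposition \ref{propositiona2}, for $x\in \Omega_{\dot{\varrho}}$ the projection $\mathfrak{p}(x)\in\Gamma$ is well defined and $\nu(\mathfrak{p}(x))$ makes sense; since the cone $\mathscr{C}(\mathfrak{p}(x))$ depends on $x$ only through $p:=\mathfrak{p}(x)$ and $\xi=-\nu(p)$, it suffices to prove that for every $p\in\Gamma$ the cone with vertex $p$, axis $-\nu(p)$, half-angle $\theta$ and height $\overline{\varrho}$ is contained in $B_p$, hence in $\Omega$. Fixing such a $p$, I would place it at the origin with $-\nu(p)=e_n$ after a rigid motion; then $B_p=B(\rho_0 e_n,\rho_0)$, whose defining inequality is the clean relation $|y|^2<2\rho_0 y_n$ (writing $y=(y',y_n)$).

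The containment is then a one-line computation. I would choose any $\theta\in(0,\pi/2)$ and set $\overline{\varrho}=\min(\dot{\varrho},2\rho_0\cos\theta)$, so that $\overline{\varrho}\le\dot{\varrho}$ as required. If $y$ lies in the cone, then $0<|y|<\overline{\varrho}\le 2\rho_0\cos\theta$ and $y_n>|y|\cos\theta$, whence
\[
|y|^2=|y|\,|y|<2\rho_0\cos\theta\,|y|=2\rho_0\,(|y|\cos\theta)<2\rho_0 y_n,
\]
so $y\in B_p\subset\Omega$. Since the rigid motion carrying $p$ to the origin moves the ball $B_p$ and the cone $\mathscr{C}(p)$ isometrically, the conclusion is coordinate-free and holds for the original $p=\mathfrak{p}(x)$.

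I expect no serious obstacle here: the only genuine input is the \emph{uniform} interior sphere property, and its uniformity (that $\rho_0$ does not depend on $p$) is precisely what guarantees $\theta$ and $\overline{\varrho}$ can be taken uniform in $x$. The remaining work is the elementary reduction to the model ball at a single boundary point and the short inequality above.
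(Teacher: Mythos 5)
Your proof is correct and follows exactly the route the paper indicates: the paper states this lemma without detailed proof, remarking only that it follows from Proposition \ref{propositiona2} together with the uniform interior sphere property of $C^{1,1}$ domains (citing \cite{Ba}), which is precisely the input you invoke. Your reduction to the model ball $B(\rho_0 e_n,\rho_0)$ and the inequality $|y|^2<2\rho_0 y_n$ with $\overline{\varrho}=\min(\dot{\varrho},2\rho_0\cos\theta)$ correctly supplies the elementary computation the paper leaves implicit.
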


Taking $\varrho =\overline{\varrho}/9$ in \eqref{a8}, we obtain

\begin{proposition}\label{propositiona3}
Let $\overline{\varrho}$ as in Lemma \ref{lemmaa1}. Then there exist $\gamma_0=\gamma_0(n,\Omega,\varkappa)>0$  and $\gamma_1=\gamma_1(n,\Omega,\varkappa,\Sigma)>0$  so that, for any $\epsilon >0$ and $u\in \mathcal{K}_\delta$, we have
\begin{equation}\label{a9}
C_1\|u\|_{L^2(\Omega^{\overline{\varrho}/3})}\le \epsilon^{\gamma_0}\|u\|_{H^1(\Omega )}
+\epsilon^{-\gamma_1}\mathcal{C}(u).
\end{equation}
\end{proposition}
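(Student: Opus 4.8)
The plan is to derive \eqref{a9} from the already-established estimate \eqref{a8} by choosing the scale $\varrho = \overline{\varrho}/9$ and absorbing the resulting $\varrho$-dependent constants into the exponents $\gamma_0$ and $\gamma_1$. With this fixed choice of $\varrho$, the quantity $\ell(\varrho) = t e^{-C_0/\varrho}$ becomes a fixed number $\ell_0 \in (0,1)$ depending only on $n, \Omega, \varkappa$ (since $\overline{\varrho}$ depends only on $n, \Omega$ through Lemma \ref{lemmaa1} and Proposition \ref{propositiona2}). Consequently the exponents $\frac{1}{1-\ell(\varrho)}$ and $\frac{1}{\ell(\varrho)}(1 + \frac{1}{\gamma(1-\ell(\varrho))})$ appearing in $\phi_0$ and $\phi_1$ collapse to fixed positive constants.

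First I would verify the covering step that produced \eqref{a8} is applicable with $\varrho = \overline{\varrho}/9$: the set $\Omega^{\overline{\varrho}/3} = \Omega^{3\varrho}$ is covered by at most $k = [d/\varrho]^n$ balls $B(y,\varrho)$ with centers $y \in \Omega^{3\varrho}$, and for each such center the cone condition supplied by Lemma \ref{lemmaa1} (applied at the boundary projection) furnishes an interior point $x_\varrho$ with $\mathrm{dist}(x_\varrho, \partial(B\cap\Omega)) > 3\varrho$, which is exactly what \eqref{a5}--\eqref{a7} require. Summing \eqref{a7} over the $k$ balls and bounding $k \le \varrho^{-n} d^n$ yields \eqref{a8} verbatim. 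I would then simply read off \eqref{a9} by setting
\[
\gamma_0 = \frac{1}{1-\ell_0}, \qquad \gamma_1 = \frac{1}{\ell_0}\Bigl(1 + \frac{1}{\gamma(1-\ell_0)}\Bigr),
\]
and absorbing the harmless prefactor $\varrho^{-n}$ (a fixed constant once $\overline{\varrho}$ is fixed) together with $d^n$ into the generic constant $C_1$. Here $\gamma_0$ depends only on $n,\Omega,\varkappa$ through $\ell_0$, while $\gamma_1$ inherits an additional dependence on $\Sigma$ through the constant $\gamma$ of Proposition \ref{propositiona1}, matching the stated dependencies exactly.

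I do not expect a genuine obstacle: the substance of the argument is entirely contained in Proposition \ref{propositiona1}, the three-ball inequality \eqref{a5}, the optimization producing \eqref{a7}, and the covering bound \eqref{a8}, all of which precede the statement. The only point demanding care is bookkeeping of the parameter dependencies---ensuring that fixing $\varrho = \overline{\varrho}/9$ removes all $\varrho$-dependence from the exponents so that $\gamma_0, \gamma_1$ are genuine constants in the claimed variables---and confirming that the cone opening $\theta$ and radius $\overline{\varrho}$ from Lemma \ref{lemmaa1} are uniform over the boundary, which is precisely the content of the uniform interior sphere property invoked there. Beyond this, the proof is a direct specialization and requires no new estimate.
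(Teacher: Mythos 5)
Your proof is correct and is essentially the paper's own: the paper's entire proof of Proposition \ref{propositiona3} is the one-line specialization $\varrho=\overline{\varrho}/9$ in \eqref{a8}, after which $\ell(\overline{\varrho}/9)$ and the prefactor $\varrho^{-n}$ are fixed constants and your formulas for $\gamma_0,\gamma_1$ can be read off for $0<\epsilon\le 1$ (for $\epsilon\ge 1$ the inequality is trivial, since $\epsilon^{\gamma_0}\|u\|_{H^1(\Omega)}$ alone dominates the left-hand side). One correction of provenance: the reference point $x_\varrho$ in \eqref{a5}--\eqref{a7} is not furnished by Lemma \ref{lemmaa1} at each center's boundary projection; it is a single fixed point in the cone with vertex $\tilde{x}\in B\cap\Gamma\Subset\Sigma$ contained in the Lipschitz set $B\cap\Omega$ of Proposition \ref{propositiona1}, and this same $x_\varrho$ serves every center $y\in\Omega^{3\varrho}$ because \eqref{a5} holds for arbitrary $y,y_0\in\Omega^{3\varrho}$; a cone at a generic boundary point would place $B(x_\varrho,\varrho)$ outside $B\cap\Omega$, where \eqref{a4} gives no control by $\mathcal{C}(u)$. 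Since you only specialize the already-established \eqref{a8}, this slip is harmless: Lemma \ref{lemmaa1} enters here solely through the uniform scale $\overline{\varrho}$, which one may shrink if necessary to ensure $\overline{\varrho}/9<\varrho_0$.
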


We are now ready to prove Theorem \ref{theorem.s1}.

\begin{proof}[Proof of Theorem \ref{theorem.s1}]
Let $0<r<\overline{\varrho}/3$ and pick arbitrarily $x\in \Omega^{r}\setminus \Omega^{\overline{\varrho}/3}$. Set $d=\rm{dist}(x,\Gamma)$, $\tilde{x}=\mathfrak{p}(x)$ and
\[
x_0=\tilde{x}-(d+\tilde{d})\nu (\tilde{x}),
\]
Choose $\tilde{d}>0$  in such a way that $2\overline{\varrho}/3<d+\tilde{d}<\overline{\varrho}$. In that case, $x_0\in \Omega_{\overline{\varrho}}$ and $B(x_0,\overline{\varrho}/3)\subset \Omega^{\overline{\varrho}/3}$. Since $x_0-\tilde{x}$ is colinear to $\nu(\tilde{x})$ we derive from Proposition \ref{propositiona2} that $\tilde{x}_0=\tilde{x}$, $\nu(\tilde{x}_0)=\nu(\tilde{x})$ and $d_0=\mathrm{dist}(x_0,\Gamma)=d+\tilde{d}$. 

Let $\mathscr{C}(\tilde{x})$ be the cone given by Lemma \ref{lemmaa1} and $\rho_0=\min [(d_0/3) \sin \theta,\overline{\varrho}/9]$. This choice of $\rho_0$ guarantees  that $B(x_0,3\rho_0)\subset \mathscr{C}(\tilde{x})\cap B(x_0,\overline{\varrho}/3)$. 

Reducing if necessary $\theta$ we may assume that $\sin \theta \le 1/3$. Hence $\rho_0=(d_0/3) \sin \theta$. Define then the sequence of balls $(B(x_k, 3\rho _k))$ as follows
\begin{eqnarray*}
\left\{
\begin{array}{ll}
x_{k+1}=x_k-\alpha _k \xi ,
\\
\rho_{k+1}=\mu \rho_k ,
\\
d_{k+1}=\mu d_k,
\end{array}
\right.
\end{eqnarray*}
where
\[
d_k=|x_k-\tilde{x}|,\quad \rho _k=\varpi d_k,\quad \alpha _k=(1-\mu)d_k ,
\]
with
\[
\varpi =\frac{\sin \theta}{3},\quad \mu =\frac{3-2\sin \theta}{3-\sin \theta}.
\]
This definition guarantees that 
\[
B(x_k,3\rho _k)\subset \mathscr{C}(\tilde{x}) \quad {\rm and}\quad  B(x_{k+1},\rho _{k+1})\subset B(x_k,2\rho _k),\quad k\ge 1.
\]

Let $\epsilon>0$ and $u\in \mathcal{K}_\delta$. We obtain by mimicking the proof of \cite[(A27)]{BC}
\begin{equation}
C_0\|u\|_{L^2(B(x_k,\rho _k))}\le \epsilon^{-\frac{1}{\gamma^k}} \left(\|u\|_{L^2(B(x_0,\rho _0))}+\|\mathscr{E}u\|_{L^2(\Omega)}\right) \label{a10}
+\epsilon^{\frac{1}{1-\gamma ^k}}\|u\|_{L^2(\Omega)},
\end{equation}
where $0<\gamma=\gamma(n,\Omega,\varkappa) <1$.

As the union of the balls $\overline{B}(x_k,\rho _k)$ contains the segment joining $\tilde{x}$ to $x_0$ we find an integer $k_x \ge 1$ so that $x\in \overline{B}(x_{k_x},\rho _{k_x})\setminus B(x_{k_x-1},\rho _{k_x-1})$. We check in a straightforward manner that 
\begin{equation}\label{a10.1}
k_x\le |\ln \mu|^{-1}\ln [\mu^{-1}(1+\tilde{d}/d)]\le h(r)=|\ln \mu|^{-1}\ln [2\mu^{-1}\overline{\varrho}/r].
\end{equation}
Then \eqref{a10} with $k=k_x$ yields
\begin{align}
&C_0\|u\|_{L^2(B(x_{k_x},\rho _{k_x}))}\le \epsilon^{-\frac{1}{\gamma^{k_x}}} \left(\|u\|_{L^2(B(x_0,\overline{\varrho}/3))}+\|\mathscr{E}u\|_{L^2(\Omega)}\right) \label{a13}
\\
&\hskip 7.5cm+\epsilon^{\frac{1}{1-\gamma ^{k_x}}}\|u\|_{L^2(\Omega)}.\nonumber
\end{align}
We get by substituting in \eqref{a13} $\epsilon$ by $\epsilon^{\gamma^{k_x}}$
\begin{align}
&C_0\|u\|_{L^2(B(x_{k_x},\rho _{k_x}))}\le \epsilon^{-1} \left(\|u\|_{L^2(B(x_0,\overline{\varrho}/3))}+\|\mathscr{E}u\|_{L^2(\Omega)}\right) \label{a13.1}
\\
&\hskip 7.5cm+\epsilon^{\frac{\gamma^{k_x}}{1-\gamma ^{k_x}}}\|u\|_{L^2(\Omega)}.\nonumber
\end{align}
Since $B(x_0,\overline{\varrho}/3)\subset \Omega^{\overline{\varrho}/3}$, a combination of \eqref{a9} and \eqref{a13.1} gives
\begin{align}
&C_1\|u\|_{L^2(B(x_{k_x},\rho _{k_x}))}\le \epsilon^{-1}\epsilon_1^{\gamma_0}\|u\|_{H^1(\Omega)}\label{a13.2}
\\
&\hskip 2cm +\epsilon^{\frac{\gamma^{k_x}}{1-\gamma ^{k_x}}}\|u\|_{L^2(\Omega)} +\epsilon^{-1}(\epsilon_1^{-\gamma_1}+1)\mathcal{C}(u),\quad \epsilon_1>0.
\nonumber
\end{align}
Taking in this inequality $\epsilon_1=\epsilon^{1/(\gamma_0(1-\gamma^k))}$, we obtain
\begin{equation}\label{a13.3}
C_1\|u\|_{L^2(B(x_{k_x},\rho _{k_x}))}\le \epsilon^{\frac{\gamma^{k_x}}{1-\gamma ^{k_x}}}\|u\|_{H^1(\Omega)}+\epsilon^{-1}(\epsilon^{-\frac{\gamma_1}{\gamma_0(1-\gamma^{k_x})}}+1)\mathcal{C}(u).
\end{equation}

Let $\tau >0$ to be specified later. Then $\epsilon=r^{\tau \frac{1-\gamma^{k_x}}{\gamma^{k_x}}}$ in \eqref{a13.3} yields
\begin{equation}\label{a13.4}
C_1\|u\|_{L^2(B(x_{k_x},\rho _{k_x}))}\le r^\tau \|u\|_{H^1(\Omega)}+r^{-\tau \frac{1-\gamma^{k_x}}{\gamma^{k_x}}}\left(r^{-\frac{\tau\gamma_1}{\gamma_0\gamma^{k_x}}}+1\right)\mathcal{C}(u).
\end{equation}
In consequence, assuming in addition that $r<1$, we have 
\[
C_1\|u\|_{L^2(B(x_{k_x},\rho _{k_x}))}\le r^\tau \|u\|_{H^1(\Omega)}+r^{-\frac{\mathfrak{a}}{\gamma^{k_x}}}\mathcal{C}(u),
\]
where $\mathfrak{a}=\mathfrak{a}(n,\Omega,\varkappa,\Sigma,\tau)>0$. This and \eqref{a10.1} imply
\begin{equation}\label{a13.5}
C_1\|u\|_{L^2(B(x_{k_x},\rho_{k_x}))}\le r^\tau \|u\|_{H^1(\Omega)}+r^{-\frac{\mathfrak{a}}{h(r)}}\mathcal{C}(u).
\end{equation}
By \eqref{a10.1} we get
\[
\mu^{k_x}\ge \mu(1+\tilde{d}/d)^{-1}\ge  \mu (2\overline{\varrho}/r)^{-1}
\]
and hence
\[
\rho_{k_x}\ge \frac{\sin \theta}{3}(d+\tilde{d})\mu^{k_x}\ge \frac{\mu \sin \theta}{18}r=\rho_r.
\]
As $x$ was chosen arbitrary in $\Omega^r\setminus\Omega^{\overline{\varrho}/3}$ and $\Omega^r\setminus\Omega^{\overline{\varrho}/3}$ can be covered by at most $O(1/r^n)$ balls of the form $B(x_{k_x},\rho_r)$,  \eqref{a13.5} yields
\begin{equation}\label{a13.6}
C_1\|u\|_{L^2(\Omega^r\setminus\Omega^{\overline{\varrho}/3})}\le r^{-n+\tau} \|u\|_{H^1(\Omega)}+r^{-n-\frac{\mathfrak{a}}{h(r)}}\mathcal{C}(u).
\end{equation}
Fix $0<s<1/2$ arbitrarily and put $\tau =n+s$ in order to get
\begin{equation}\label{a13.7}
C_1\|u\|_{L^2(\Omega^r\setminus\Omega^{\overline{\varrho}/3})}\le r^{s} \|u\|_{H^1(\Omega)}+r^{-n-\frac{\mathfrak{a}}{h(r)}}\mathcal{C}(u).
\end{equation}

On the other hand, we have from \cite[(8)]{Ch2020} (a consequence of Hardy's inequality)
\begin{equation}\label{a13.8}
\|u\|_{L^2(\Omega_r)}\le \mathfrak{c}r^s\|u\|_{H^1(\Omega)},
\end{equation}
where $\mathfrak{c}=\mathfrak{c}(n,\Omega ,s)>0$.

From now on, $C'_1=C'_1(n,\Omega ,\varkappa,\delta,\Sigma,s)>0$ is a generic constant.

Combining  \eqref{a13.7} and \eqref{a13.8}, we derive
\[
C'_1\|u\|_{L^2(\Omega\setminus\Omega^{\overline{\varrho}/3})}\le r^s\|u\|_{H^1(\Omega)}+r^{-n-\frac{\mathfrak{a}}{h(r)}}\mathcal{C}(u).
\]
Hence
\[
C'_1\|u\|_{L^2(\Omega\setminus\Omega^{\overline{\varrho}/3})}\le r^s\|u\|_{H^1(\Omega)}+r^{-\mathfrak{b}r^{-\beta}}\mathcal{C}(u),
\]
where $\mathfrak{b}=\mathfrak{b}(n,\Omega,\varkappa,\Sigma ,s)>0$  and $\beta =|\ln \gamma||\ln \mu|^{-1}$.

This and \eqref{a9} with $\epsilon$ chosen in such a way that $\epsilon^{\gamma_0}=r^s$ give, by modifying if necessary $\mathfrak{b}$,
\begin{equation}\label{a13.9}
C'_1\|u\|_{L^2(\Omega)}\le r^s\|u\|_{H^1(\Omega)}+r^{-\mathfrak{b}r^{-\beta}}\mathcal{C}(u).
\end{equation}

Now, in light of the interpolation inequality
\[
\mathbf{c}\|u\|_{H^{1/2}(\Omega)}\le \epsilon \|u\|_{H^1(\Omega)}+\epsilon^{-1}\|u\|_{L^2(\Omega)},\quad \epsilon >0,
\]
where $\mathbf{c}=\mathbf{c}(n,\Omega)$, we get from \eqref{a13.9}
\[
C'_1\|u\|_{H^{1/2}(\Omega)}\le (\epsilon +\epsilon^{-1}r^s)\|u\|_{H^1(\Omega)}+\epsilon^{-1}r^{-\mathfrak{b}r^{-\beta}}\mathcal{C}(u).
\]
Taking $\epsilon=r^{s/2}$ and substituting $\mathfrak{b}$ by a similar constant, we obtain
\begin{equation}\label{a13.9.1}
C'_1\|u\|_{H^{1/2}(\Omega)}\le r^{s/2}\|u\|_{H^1(\Omega)}+r^{-\mathfrak{b}r^{-\beta}}\mathcal{C}(u).
\end{equation}

Pick $0<\eta_0<\eta_1 \le 1/2$ and let $u\in \mathcal{K}_{\eta_1,\delta}$. From the interpolation inequality
\[
\mathbf{c}_1\|u\|_{H^{3/2+\eta_0}(\Omega)}\le \epsilon^{(\eta_1-\eta_0)/(1+\eta_0)}\|u\|_{H^{3/2+\eta_1}(\Omega)}+\epsilon^{-1}\|u\|_{H^{1/2}(\Omega)},\quad \epsilon >0,
\]
where $\mathbf{c}_1=\mathbf{c}_1(n,\Omega,\eta_0,\eta_1)>0$ (e.g. \cite[Theorem 9.6, page 43]{LM}), and \eqref{a13.9.1}, we get
\begin{equation}\label{a13.9.2}
C'_1\|u\|_{H^{3/2+\eta_0}(\Omega)}\le (\epsilon^{-1}r^{s/2}+\epsilon^{\zeta_0})\|u\|_{H^{3/2+\eta_1}(\Omega)}+\epsilon^{-1}r^{-\mathfrak{b}r^{-\beta}}\mathcal{C}(u),
\end{equation}
where $\zeta_0=(\eta_1-\eta_0)/(1+\eta_0)$ and the constant $C'_1$ may also depends on $\eta$ and $\eta_0$.

If $\zeta=\min(\zeta_0,s/4)$ then $\epsilon=r^{s/4}$ in \eqref{a13.9.2} gives, for some $0<r^\ast <1$,
\begin{equation}\label{a13.9.3}
C'_1\|u\|_{H^{3/2+\eta_0}(\Omega)}\le r^ \zeta \|u\|_{H^{3/2+\eta_1}(\Omega)}+r^{-\mathfrak{b}r^{-\beta}}\mathcal{C}(u),\quad 0<r<r^\ast,
\end{equation}
where the constants $C'_1$ and $\mathfrak{b}$ may depend also on $\eta_0$ and $\eta_1$.

We obtain \eqref{s1.4} by substituting in this inequality $r$ by $r/r^\ast$ and modifying $C'_1$ and $\mathfrak{b}$ by similar constants. 
\end{proof}

\begin{proof}[Proof of Theorem \ref{theorem.s2}]
Follows by modifying  slightly  the proof of Theorem \ref{theorem.s1}. The notations and assumptions are those of the preceding proof.

From \cite[Proposition A2]{BC}, we get
\begin{equation}\label{a14}
C_2\|u\|_{L^2(B(x_0,\overline{\varrho}/3))}\le \epsilon_1^\beta\|u\|_{L^2(\Omega)}+\epsilon_1^{-1}\mathcal{I}(u),\quad \epsilon_1>0,
\end{equation}
where $\beta=\beta (n,\Omega, \varkappa, \delta,\omega)>0$.

This inequality together with \eqref{a13.1} yield
\begin{align}
C_2\|u\|_{L^2(B(x_{k_x},\rho _{k_x}))}\le (\epsilon^{-1}\epsilon_1^{\beta}+\epsilon^{\frac{\gamma^{k_x}}{1-\gamma ^{k_x}}})&\|u\|_{L^2(\Omega)}\label{a15}
\\
& +\epsilon^{-1}(\epsilon_1^{-1}+1)\mathcal{I}(u),\quad \epsilon_1>0.
\nonumber
\end{align}
We omit the rest of the proof which is quite similar to that of Theorem \ref{theorem.s1}. One has only to substitute \eqref{a13.2} by \eqref{a15}.
\end{proof}

\section{Proof of the strong uniqueness of continuation}

We prove the results announced in Subsection \ref{sb1.2}. For $u\in W^{1,\infty}(\Omega)$, set
\begin{align*}
V_u&=\int_0^1\partial_{z_0}f(\cdot ,tu(\cdot),t\nabla u(\cdot))dt,
\\
W_u^j&=\int_0^1\partial_{z_j}f(\cdot ,u(\cdot,tu(\cdot),t\nabla u(\cdot))dt,\quad j=1,\ldots n.
\end{align*}

\begin{lemma}\label{lemma.su1}
For any $u\in W^{1,\infty}(\Omega)$, we have $V_u\in L^{n/2}(\Omega)$ and $W_u^j\in L^s(\Omega)$, $j=1,\ldots n$.
\end{lemma}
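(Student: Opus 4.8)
The plan is to bound both families of functions pointwise by the dominating functions $g_\delta^0$ and $g_\delta^j$ furnished by the structural hypotheses on $f$, with $\delta$ chosen to control the $W^{1,\infty}$ size of $u$. First I would set $\delta=\|u\|_{W^{1,\infty}(\Omega)}$ (or any constant for which $|(u(x),\nabla u(x))|\le\delta$ holds for a.e. $x$), and observe that for every $t\in[0,1]$ the argument $z=(tu(x),t\nabla u(x))=t(u(x),\nabla u(x))$ satisfies $|z|=t\,|(u(x),\nabla u(x))|\le\delta$ for a.e. $x\in\Omega$. Hence the pointwise bounds on the partial derivatives of $f$ apply uniformly in $t\in[0,1]$.

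Next I would invoke those hypotheses directly: for a.e. $x$ and all $t\in[0,1]$,
\[
|\partial_{z_0}f(x,tu(x),t\nabla u(x))|\le g_\delta^0(x),\qquad |\partial_{z_j}f(x,tu(x),t\nabla u(x))|\le g_\delta^j(x).
\]
Integrating in $t$ and using that $\left|\int_0^1(\cdot)\,dt\right|\le\int_0^1|\cdot|\,dt$ yields $|V_u(x)|\le g_\delta^0(x)$ and $|W_u^j(x)|\le g_\delta^j(x)$ for a.e. $x$. Since $g_\delta^0\in L^{n/2}(\Omega)$ and $g_\delta^j\in L^s(\Omega)$ by assumption, the comparison gives $V_u\in L^{n/2}(\Omega)$ and $W_u^j\in L^s(\Omega)$, with $\|V_u\|_{L^{n/2}(\Omega)}\le\|g_\delta^0\|_{L^{n/2}(\Omega)}$ and the analogous bound for $W_u^j$, which is the assertion.

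The only genuinely nonroutine point, which I would settle before performing the integration, is the measurability of $V_u$ and $W_u^j$, namely that the $t$-integrals define measurable functions of $x$. This follows from the Carath\'eodory structure of $f$: by hypothesis $f(x,\cdot)\in C^1(\mathbb{R}^{n+1})$ for a.e. $x$ and $f$ is measurable, so each partial derivative is a Carath\'eodory function, and its composition with the measurable map $x\mapsto(tu(x),t\nabla u(x))$ is measurable in $x$ and jointly measurable in $(t,x)$; integration against the finite measure $dt$ on $[0,1]$ then preserves measurability by Fubini--Tonelli. I expect this to be the main (and still minor) obstacle, the integrability itself being immediate from the dominating-function hypotheses.
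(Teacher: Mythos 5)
Your proof is correct and takes essentially the same route as the paper: both fix $\delta=\|u\|_{W^{1,\infty}(\Omega)}$, note that $t(u(x),\nabla u(x))$ stays in the ball $|z|\le\delta$ for $t\in[0,1]$, and dominate pointwise by $g_\delta^0$ and $g_\delta^j$ — the paper's only cosmetic difference is that it passes through Jensen's inequality, $|V_u|^{n/2}\le\int_0^1|\partial_{z_0}f(\cdot,tu,t\nabla u)|^{n/2}\,dt\le (g_\delta^0)^{n/2}$, where your direct triangle-inequality bound $|V_u|\le g_\delta^0$ is if anything simpler. Your explicit attention to measurability of the $t$-integrals and to the choice of $\delta$ dominating $|(u,\nabla u)|$ is more careful than the paper's one-line proof, but the substance is identical.
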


\begin{proof}
Pick $u\in W^{1,\infty}(\Omega)$ and let $\delta=\|u\|_{W^{1,\infty}(\Omega)}$. In light of Jensen's inequality, we have
\[
|V_u|^{n/2}\le \int_0^1|\partial_{z_0}f(\cdot ,tu(\cdot),t\nabla u(\cdot))|^{n/2}dt\le (g_\delta^0)^{n/2}.
\]
Hence $V_u\in L^{n/2}$ follows. 

We proceed similarly to prove that $W_u^j\in L^s(\Omega)$, $j=1,\ldots n$.
\end{proof}

Define 
\[
\Omega (\rho)=\{x\in \Omega ;\; \mathrm{dist}(x,\Gamma)>4\rho\},\quad \rho >0.
\]
We only consider $\rho\le \rho_0$, where $\rho_0$ is chosen in such a way to guarantee that $\Omega(\rho)$ is connected.

As in the linear case (e.g. \cite{GL1}), the main ingredient in the proof of the strong uniqueness of continuation property is the following doubling inequality. 

\begin{lemma}\label{lemma.su2}
For every $u\in W^{1,\infty}(\Omega)$ satisfying $\mathscr{E}u=0$ in $\Omega$, there exist $\rho_0=\rho_0(\Omega ,\varkappa , f,u)>0$, $0<\kappa=\kappa(\Omega ,\varkappa , f,u)<1/4$ and $C_u=C_u(\Omega ,\varkappa , f,u)>0$ such that, for any $\rho<\rho_0$, $x_0\in \Omega(\rho)$ and $r<\kappa \rho$, we have
\begin{equation}\label{su1}
\int_{B(x_0,2r)}u^2dx\le C_u \int_{B(x_0,r)}u^2dx.
\end{equation}
\end{lemma}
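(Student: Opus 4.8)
The plan is to reduce the semilinear problem to a linear one whose potentials lie in critical and subcritical Lebesgue classes, and then to invoke the doubling inequality already available in the linear setting. Fix $u\in W^{1,\infty}(\Omega)$ with $\mathscr{E}u=0$ and put $\delta=\|u\|_{W^{1,\infty}(\Omega)}$. Since $f(\cdot,0)=0$ and $f(x,\cdot)\in C^1(\mathbb{R}^{n+1})$, the fundamental theorem of calculus applied to the map $t\mapsto f(x,tu(x),t\nabla u(x))$ gives, for a.e. $x\in\Omega$,
\[
f(x,u(x),\nabla u(x))=\int_0^1\frac{d}{dt}f(x,tu(x),t\nabla u(x))\,dt=V_u(x)u(x)+\sum_{j=1}^n W_u^j(x)\partial_{x_j}u(x).
\]
Consequently $u$ is a weak solution of the \emph{linear} equation
\[
\mathrm{div}(A\nabla u)+V_u\,u+W_u\cdot\nabla u=0\quad\text{in }\Omega,\qquad W_u=(W_u^1,\ldots,W_u^n).
\]

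By Lemma \ref{lemma.su1} the linearized coefficients satisfy $V_u\in L^{n/2}(\Omega)$ and $W_u^j\in L^s(\Omega)$ with $s>n$. Together with the Lipschitz ellipticity of $A$, these are precisely the hypotheses under which the doubling inequality holds in the linear case, as studied in \cite{MV}. I would therefore apply that linear result directly to the equation above to obtain constants $\rho_0$, $\kappa$ and $C_u$ — depending on $\Omega$, $\varkappa$, and on the local $L^{n/2}$ and $L^s$ norms of $V_u$ and $W_u$, hence on $f$ and on $u$ — for which \eqref{su1} holds whenever $x_0\in\Omega(\rho)$ and $r<\kappa\rho$. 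The restrictions $x_0\in\Omega(\rho)$ and $r<\kappa\rho<\rho/4$ ensure that $B(x_0,2r)\Subset\Omega$, so the relevant balls stay compactly inside $\Omega$; this is exactly the \textbf{linearization procedure} advertised in the abstract, and it is what transfers the dependence of the constants onto the fixed solution $u$.

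The genuine difficulty is concentrated entirely in the linear doubling inequality itself, and this is the step I expect to be the main obstacle. A direct proof proceeds through an Almgren-type frequency function, whose almost-monotonicity for the divergence-form operator with Lipschitz leading coefficient $A$ goes back to Garofalo–Lin \cite{GL1,GL2}. The delicate point is to absorb the first-order perturbation $V_u u+W_u\cdot\nabla u$ into the monotonicity formula: one estimates these terms through Sobolev embedding, using that $L^{n/2}$ is the scaling-critical class for the zeroth-order coefficient and that $s>n$ is subcritical for the gradient coefficient, so that the perturbations generate error contributions which are integrable in the radial variable and do not destroy the quasi-monotonicity of the frequency. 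Since this is exactly the content established in \cite{MV} for the linear equation, the doubling inequality \eqref{su1} for the semilinear operator $\mathscr{E}$ follows from the reduction carried out above.
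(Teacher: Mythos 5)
Your proposal is correct and follows essentially the same route as the paper: linearize via the fundamental theorem of calculus to write $f(\cdot,u,\nabla u)=V_uu+W_u\cdot\nabla u$, use Lemma \ref{lemma.su1} to place $V_u\in L^{n/2}(\Omega)$ and $W_u^j\in L^s(\Omega)$, and then apply the linear doubling inequality of \cite{MV} (Proposition 2 there). The only difference is presentational — you spell out the linearization identity and sketch the frequency-function mechanism behind the linear result, while the paper states the identity as immediate and cites \cite{MV} directly.
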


\begin{proof}
Pick $u\in W^{1,\infty}(\Omega)$. Then is not hard to check that
\begin{equation}\label{su0}
\mathscr{E}u=\mathrm{div}(A\nabla u)+Vu+W\cdot \nabla u,
\end{equation}
where $V=V_u$ and $W=(W_u^1,\ldots ,W_u^n)$.

In light of Lemma \ref{lemma.su1}, \eqref{su1} follows by applying \cite[Proposition 2]{MV}.
\end{proof}

\begin{corollary}\label{corollary.su1}
Let $u\in W^{1,\infty}(\Omega)$ satisfying $\mathscr{E}u=0$ in $\Omega$. If $u$ vanishes of infinite order at $x_0\in \Omega$ then $u$ vanishes in a neighborhood of $x_0$.
\end{corollary}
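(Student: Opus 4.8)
The plan is to deduce Corollary \ref{corollary.su1} directly from the doubling inequality in Lemma \ref{lemma.su2}. The statement to prove is that if $u\in W^{1,\infty}(\Omega)$ solves $\mathscr{E}u=0$ and vanishes of infinite order at $x_0\in\Omega$, then $u\equiv 0$ on some ball centered at $x_0$. The natural strategy is to show that the doubling inequality, combined with the infinite-order vanishing, forces $\int_{B(x_0,r)}u^2\,dx=0$ for all sufficiently small $r$.

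First I would fix $x_0\in\Omega$ and choose $\rho<\rho_0$ small enough that $x_0\in\Omega(\rho)$ (possible since $x_0$ is interior and $\Omega(\rho)$ exhausts the interior as $\rho\to 0$). Then Lemma \ref{lemma.su2} gives constants $\kappa$ and $C_u$ so that
\[
\int_{B(x_0,2r)}u^2\,dx\le C_u\int_{B(x_0,r)}u^2\,dx,\qquad r<\kappa\rho.
\]
The key step is to iterate this inequality downward. Setting $I(r)=\int_{B(x_0,r)}u^2\,dx$ and writing the doubling estimate as $I(2r)\le C_u I(r)$, one obtains by induction $I(r)\le C_u^m I(2^{-m}r)$ for any $m\ge 1$ with $2^{-m}r$ still in the admissible range, but the cleaner route is to fix a small admissible radius $R<\kappa\rho$ and iterate $I(R)\le C_u\,I(R/2)\le C_u^2\,I(R/4)\le\cdots\le C_u^m\,I(R/2^m)$.

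Next I would invoke the infinite-order vanishing hypothesis: for every $N\in\mathbb{N}$ there is a constant $c_N$ with $I(r)\le c_N r^N$ for $r$ small. Applying this to $r=R/2^m$ yields $I(R)\le C_u^m\,c_N\,(R/2^m)^N=c_N R^N\,(C_u/2^N)^m$. Now choose $N$ large enough that $2^N>C_u$, i.e. $C_u/2^N<1$; then letting $m\to\infty$ forces $I(R)=0$. Since $R$ was an arbitrary admissible radius below $\kappa\rho$, we conclude $\int_{B(x_0,R)}u^2\,dx=0$, hence $u=0$ a.e. on $B(x_0,R)$, which is the desired conclusion that $u$ vanishes in a neighborhood of $x_0$.

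The main obstacle, or rather the only delicate point, is bookkeeping the admissibility constraints: the doubling inequality holds only for $x_0\in\Omega(\rho)$ and $r<\kappa\rho$, so one must verify that the dyadic radii $R/2^m$ all remain below $\kappa\rho$ (automatic once $R<\kappa\rho$) and that the $O(r^N)$ decay is valid on this same range of small $r$. No regularity or elliptic machinery beyond Lemma \ref{lemma.su2} is needed; the argument is the standard measure-theoretic consequence of a doubling estimate, namely that a locally square-integrable function satisfying a doubling inequality and vanishing to infinite order must vanish identically near the base point.
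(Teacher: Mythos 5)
Your proof is correct and follows essentially the same route as the paper: iterate the doubling inequality of Lemma \ref{lemma.su2} to get $I(r)\le C_u^m I(2^{-m}r)$, then invoke the infinite-order vanishing with $N$ chosen so that $2^N>C_u$ and let $m\to\infty$ to conclude $I(r)=0$. Your explicit choice of $N$ is in fact a slightly cleaner presentation of the paper's equivalent bookkeeping via the exponent $\mu_k$ defined by $C^k2^{-n\mu_k}=1$, and your attention to the admissibility constraints $x_0\in\Omega(\rho)$, $r<\kappa\rho$ is exactly what the paper implicitly assumes.
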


\begin{proof}
Let $u\in W^{1,\infty}(\Omega)$ satisfying $\mathscr{E}u=0$ in $\Omega$. We use the notations of Lemma \ref{lemma.su2} corresponding to this $u$. Then fix $\rho <\rho_0$ and $r<\kappa \rho$ arbitrarily. For any integer $k\ge 1$, we obtain by iterating the doubling inequality \eqref{su1}
\[
\int_{B(x_0,r)}u^2dx\le C^k \int_{B(x_0,2^{-k}r)}u^2dx,
\]
where $C=C_u$. 

Let $\mu_k>0$ defined by the relation $C^k2^{-n\mu_k}=1$. Then the  inequality above yields

\[
\int_{B(x_0,r)}u^2dx\le 2^{n\mu_k}\int_{B(x_0,2^{-k}r)}u^2dx.
\]
Since $u$ vanishes of infinite order at $x_0$, the right hand side of the inequality above tends to $0$ when $k$ goes to $\infty$. Therefore, $u$ vanishes in $B(x_0,r)$.
\end{proof}

With help of three sphere inequality \cite[Theorem 3]{MV} we can proceed as in the proof of \cite[Proposition 2.28, page 28]{Ch2016} to derive Theorem \ref{theorem.su1} from Corollary \ref{corollary.su1}.

In the rest of this section we prove Theorem \ref{theorem.su2}. Our proof is inspired by that of \cite[Theorem 1.2]{Re}.

Let $E\subset \Omega$ be a set of positive measure. Since almost all points of $E$ are density points, there is at least one density point $x_0\in E$. Therefore, for any $0<\epsilon <1$, we find $r_\epsilon >0$ so that 
\begin{equation}\label{su2}
|E\cap B(x_0,r)||B(x_0,r)|^{-1}\ge 1-\epsilon,\quad |E^c\cap B(x_0,r)||B(x_0,r)|^{-1}\le \epsilon,
\end{equation}
for every $0<r<r_\epsilon$, where $E^c=\Omega \setminus E$.

Pick $u\in W^{1,\infty}(\Omega)$ satisfying $\mathscr{E}u=0$ and $u=0$ in $E$. We find by applying \cite[Lemma 3.4, page 54]{LU} (with $u^2$ instead of $u$)
\[
\int_{B(x_0,r)}u^2dx\le cr^n|E\cap B(x_0,r)|^{-1}|E^c\cap B(x_0,r)|^{1/n}\int_{B(x_0,r)}|\nabla u^2|dx,
\]
where $c=c(n)$.
As $|\nabla u^2|=2|u||\nabla u|$, Cauchy-Schwarz's inequality yields 
\begin{equation}\label{su3}
\int_{B(x_0,r)}u^2dx\le c^2r^{2n}|E\cap B(x_0,r)|^{-2}|E^c\cap B(x_0,r)|^{2/n}\int_{B(x_0,r)}|\nabla u|^2dx.
\end{equation}
We combine \eqref{su3} and Caccioppoli's inequality \cite[(13)]{MV} in order to obtain
\begin{equation}\label{su4}
\int_{B(x_0,r)}u^2dx\le C[r^{2n}|E\cap B(x_0,r)|^{-2}][r^{-n}|E^c\cap B(x_0,r)|]^{2/n}\int_{B(x_0,2r)}u^2dx,
\end{equation}
where $C=C(n,\Omega,\varkappa,f)$.

Putting together \eqref{su2} and \eqref{su4}, we get
\begin{equation}\label{su5}
\int_{B(x_0,r)}u^2dx\le C(1-\epsilon)^{-2}\epsilon^{2/n}\int_{B(x_0,2r)}u^2dx,\quad 0<r\le r_\epsilon.
\end{equation}

Let $N\in \mathbb{N}\setminus\{0\}$ and choose $\epsilon$ in such a way that $C(1-\epsilon)^{-2}\epsilon^{2/n}=2^{-N}$. In that case it is more convenient to denote $r_\epsilon$ by $r_N$.

Inequality \eqref{su5} then implies
\begin{equation}\label{su6}
\int_{B(x_0,r)}u^2dx\le 2^{-N}\int_{B(x_0,2r)}u^2dx,\quad 0<r\le r_N.
\end{equation}

If $\mathcal{I}(r)= \int_{B(x_0,r)}u^2dx$ then \eqref{su6} can be rewritten in the following form
\[
\mathcal{I}(r)\le 2^{-N}\mathcal{I}(2r),\quad 0<r\le r_N.
\]
Iterating this inequality, we obtain
\begin{equation}\label{su7}
\mathcal{I}(r)\le 2^{-kN}\mathcal{I}(2^kr),\quad k\in \mathbb{N}\setminus\{0\},\; 0<2^{k-1}r\le r_N.
\end{equation}

Fix $r\le r_N$ and choose $k$ in such a way that $2^{-k}r_N\le r\le 2^{-k+1}r_N$. Noting that $t\rightarrow \mathcal{I}(t)$ is a non decreasing function, we derive from \eqref{su7}
\[
\mathcal{I}(r)\le 2^{-kN}\mathcal{I}(2r_N)
\]
and hence
\[
\mathcal{I}(r)\le (r/r_N)^{N}\mathcal{I}(2r_N).
\]
This means that $\mathcal{I}(r)=O(r^N)$. In other words, $u$ vanishes of infinite order at $x_0$. Theorem \ref{theorem.su2} follows then from Theorem \ref{theorem.su1}.

\begin{remark}\label{remark.su1}
{\rm
\textbf{(i)} In light of identity \eqref{su0}, we derive readily from \cite[Theorem 1]{MV}
\begin{theorem}\label{theorem.su3}
Assume that $\Omega$ is Lipschitz. Let $\delta >0$ and $E\subset \Omega(\rho)$, $\rho\le \rho_0$, be a set of positive measure. For every $u\in \mathbf{B}_\delta$ satisfying $\|u\|_{L^2(\Omega)}\le 1$ and $\mathscr{E}u=0$ in $\Omega$, we have
\[
\|u\|_{L^2(\Omega(\rho))}\le C|\ln \|u\|_{L^2(E)}|^{-\gamma},
\]
where $C=C(n,\Omega,\varkappa, f,\rho,\delta)$ and $\gamma=\gamma (n,\Omega,\varkappa, f,\rho,\delta)$.
\end{theorem}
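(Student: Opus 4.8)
The plan is to reduce Theorem \ref{theorem.su3} to its linear counterpart, namely \cite[Theorem 1]{MV}, by means of the linearization already recorded in \eqref{su0}. First I would recall that for $u\in W^{1,\infty}(\Omega)$ the identity
\[
\mathscr{E}u=\mathrm{div}(A\nabla u)+V_uu+W_u\cdot \nabla u,\qquad W_u=(W_u^1,\ldots ,W_u^n),
\]
holds, so that the hypothesis $\mathscr{E}u=0$ in $\Omega$ is equivalent to saying that $u$ solves the \emph{linear} second-order elliptic equation
\[
\mathrm{div}(A\nabla u)+V_uu+W_u\cdot \nabla u=0\quad\text{in }\Omega .
\]
This is exactly the class of equations to which \cite[Theorem 1]{MV} applies, since that result treats the case $n\ge 3$ with a zeroth-order coefficient in $L^{n/2}(\Omega)$ and a first-order coefficient in $L^s(\Omega;\mathbb{R}^n)$ with $s>n$.

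The next step is to supply the required integrability, together with uniform norm bounds. By Lemma \ref{lemma.su1} we already have $V_u\in L^{n/2}(\Omega)$ and $W_u^j\in L^s(\Omega)$. The crucial point, which produces constants depending on $\delta$ but not on the individual $u$, is the \emph{uniformity} over $\mathbf{B}_\delta$: since $\|u\|_{W^{1,\infty}(\Omega)}\le \delta$, the pointwise bounds in the hypotheses on $f$ give
\[
|V_u|\le g_\delta^0,\qquad |W_u^j|\le g_\delta^j\quad (j=1,\ldots ,n),
\]
whence $\|V_u\|_{L^{n/2}(\Omega)}\le \|g_\delta^0\|_{L^{n/2}(\Omega)}$ and $\|W_u^j\|_{L^s(\Omega)}\le \|g_\delta^j\|_{L^s(\Omega)}$. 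Thus the coefficient norms are controlled by quantities depending only on $f$ and $\delta$.

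With these bounds in hand I would simply invoke \cite[Theorem 1]{MV} applied to $u$, using $\|u\|_{L^2(\Omega)}\le 1$, $E\subset \Omega(\rho)$, and the fact that $\Omega(\rho)$ is connected for $\rho\le\rho_0$, to obtain the logarithmic modulus of continuity
\[
\|u\|_{L^2(\Omega(\rho))}\le C\,\bigl|\ln \|u\|_{L^2(E)}\bigr|^{-\gamma}.
\]
The main obstacle is the bookkeeping of constants: one must check that the constants $C$ and $\gamma$ furnished by \cite[Theorem 1]{MV} are monotone in (or at least controlled by) the $L^{n/2}$ and $L^s$ norms of the coefficients, so that replacing $\|V_u\|_{L^{n/2}}$ and $\|W_u^j\|_{L^s}$ by their uniform majorants $\|g_\delta^0\|_{L^{n/2}}$ and $\|g_\delta^j\|_{L^s}$ yields final constants $C=C(n,\Omega,\varkappa,f,\rho,\delta)$ and $\gamma=\gamma(n,\Omega,\varkappa,f,\rho,\delta)$ that are independent of the particular $u\in\mathbf{B}_\delta$. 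Once this uniform dependence is confirmed the statement follows at once, which is why the passage from the linear to the semilinear case is, as announced, immediate.
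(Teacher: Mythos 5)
Your proposal is correct and follows essentially the same route as the paper, which likewise derives Theorem \ref{theorem.su3} directly from \cite[Theorem 1]{MV} via the linearization identity \eqref{su0}, with Lemma \ref{lemma.su1} providing the integrability $V_u\in L^{n/2}(\Omega)$, $W_u^j\in L^s(\Omega)$. Your additional observation that the pointwise bounds $|V_u|\le g_\delta^0$ and $|W_u^j|\le g_\delta^j$ hold uniformly over $\mathbf{B}_\delta$, so the constants of \cite[Theorem 1]{MV} depend only on $f$ and $\delta$ rather than on the individual $u$, is exactly the point the paper leaves implicit in its one-line derivation.
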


\textbf{(ii)} Suppose that $f$ is independent of $z'$ and satisfies
\[
|f(\cdot ,z_0)|\le g|z_0|^{r},\quad z_0\in \mathbb{R}, 
\]
where $r\in (0,1+4/(n-2))$ and $g\in L^\infty(\Omega)$, and define
\[
V_u(x)=\frac{f(x,u(x))}{u(x)},\; x\in \{u\ne 0\},\quad V_u(x)=0,\; x\in \{u=0\},\quad u\in H^1(\Omega).
\]
It is easy to check that $V_u\in L^{n/2}(\Omega)$, for every $u\in H^1(\Omega)$, and $f(\cdot,u(\cdot))=V_uu$.

Modifying slightly the proof of Theorems \ref{theorem.su1} and \ref{theorem.su2}, we derive 
\begin{theorem}\label{theorem.su4}
Let $u\in H^1(\Omega)$ satisfies $\mathscr{E}u=0$ in $\Omega$.
\\
(a) If $u$ vanishes of infinite order at $x_0\in \Omega$ then $u=0$.
\\
(b) If $u$ vanishes in a subset of $\Omega$ of positive measure then $u=0$.
\end{theorem}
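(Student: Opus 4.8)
The plan is to turn the semilinear problem into a \emph{linear} one to which the machinery of \cite{MV} already invoked in the proofs of Theorems \ref{theorem.su1} and \ref{theorem.su2} applies, the only substantive change being the weakening of the regularity of $u$ from $W^{1,\infty}(\Omega)$ to $H^1(\Omega)$. Since $f$ does not depend on $z'$, the identity $f(\cdot,u(\cdot))=V_uu$ recasts $\mathscr{E}u=0$ as the linear equation $\mathrm{div}(A\nabla u)+V_uu=0$, with no first order (drift) term, i.e. the coefficient $W$ in \eqref{su0} now vanishes identically. The first task is thus to check that $V_u\in L^{n/2}(\Omega)$ for \emph{every} $u\in H^1(\Omega)$: from $|f(\cdot,z_0)|\le g|z_0|^r$ one gets $|V_u|\le g|u|^{r-1}$ on $\{u\ne0\}$ (and $V_u=0$ elsewhere), whence
\[
\int_\Omega|V_u|^{n/2}dx\le\|g\|_{L^\infty(\Omega)}^{n/2}\int_\Omega|u|^{(r-1)n/2}dx.
\]
The condition $r<1+4/(n-2)=(n+2)/(n-2)$ forces the exponent $(r-1)n/2$ to stay below the critical Sobolev exponent $2^\ast=2n/(n-2)$, so the embedding $H^1(\Omega)\hookrightarrow L^{2^\ast}(\Omega)$ (available because $n\ge3$) bounds the right hand side. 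This plays the role of Lemma \ref{lemma.su1} in the present setting.

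With $V_u\in L^{n/2}(\Omega)$ and $W\equiv0$ in hand, I would prove part (a) by repeating the argument leading to Theorem \ref{theorem.su1}. Namely, \cite[Proposition 2]{MV} gives the doubling inequality of Lemma \ref{lemma.su2} for the $H^1$ solution $u$; iterating it exactly as in Corollary \ref{corollary.su1} shows that infinite order vanishing at $x_0$ forces $u$ to vanish on a whole ball $B(x_0,r)$. Finally the three sphere inequality \cite[Theorem 3]{MV}, together with the connectedness propagation argument of \cite[Proposition 2.28]{Ch2016}, upgrades this local vanishing to $u\equiv0$ on $\Omega$. Nothing in this chain uses more than $u\in H^1(\Omega)$ and $V_u\in L^{n/2}(\Omega)$.

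For part (b) I would follow the proof of Theorem \ref{theorem.su2} line by line. Let $x_0$ be a density point of the positive measure set on which $u$ vanishes; applying \cite[Lemma 3.4]{LU} to $u^2$, then Cauchy--Schwarz (using $|\nabla u^2|=2|u||\nabla u|$, legitimate for $u\in H^1$), and finally the Caccioppoli inequality \cite[(13)]{MV}, one reaches the estimate \eqref{su5} with the same constant $C(1-\epsilon)^{-2}\epsilon^{2/n}$. Choosing $\epsilon$ so that this constant equals $2^{-N}$ and iterating as in \eqref{su6}--\eqref{su7} yields $\int_{B(x_0,r)}u^2dx=O(r^N)$ for all $N$, i.e. $u$ vanishes of infinite order at $x_0$; part (a) then gives $u\equiv0$.

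The main obstacle is not in the propagation arguments, which are insensitive to the nonlinearity once it has been absorbed into $V_u$, but in the two integrability/regularity checks underpinning the reduction. First, one must confirm that the quantitative estimates borrowed from \cite{MV} --- the doubling inequality, the three sphere inequality and the Caccioppoli inequality --- are genuinely valid for merely $H^1(\Omega)$ solutions of $\mathrm{div}(A\nabla u)+V_uu=0$ with $V_u\in L^{n/2}(\Omega)$, and not only for the $W^{1,\infty}$ solutions used in Theorems \ref{theorem.su1} and \ref{theorem.su2}. Second, the verification $V_u\in L^{n/2}(\Omega)$ deserves care in the sublinear regime $r<1$, where $(r-1)n/2<0$ and $|u|^{(r-1)n/2}$ grows as $u\to0$; here one must argue that the region where $u$ is small still contributes a finite amount, the set $\{u=0\}$ itself contributing nothing by the very definition of $V_u$. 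Making these two points rigorous is where the real work lies.
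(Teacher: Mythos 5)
Your proposal coincides with the paper's own argument (Remark \ref{remark.su1}(ii)): write $f(\cdot,u(\cdot))=V_uu$ with $V_u=f(\cdot,u)/u$ on $\{u\ne 0\}$, check $V_u\in L^{n/2}(\Omega)$ via the subcriticality $(r-1)n/2<2n/(n-2)$ and the embedding $H^1(\Omega)\hookrightarrow L^{2^\ast}(\Omega)$, then rerun the doubling/three-sphere machinery of \cite{MV} (whose estimates are indeed stated for $H^1$ solutions with $V\in L^{n/2}$ and $W\in L^s$, $s>n$, so your first worry is settled by the reference itself) exactly as in Theorems \ref{theorem.su1} and \ref{theorem.su2}, now with $W\equiv 0$. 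The sublinear caveat $r<1$ you flag, where the exponent $(r-1)n/2$ is negative and the Sobolev bound does not close, is genuine but is equally present in the paper, which merely asserts that $V_u\in L^{n/2}(\Omega)$ ``is easy to check''; so your proposal matches the paper's proof in both substance and level of detail.
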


\textbf{(iii)} The linearization argument used for semilinear equations can be extended to fully nonlinear equations. 

We identify in the sequel a $n\times n$ matrix $m= (m_{ij})$ by the vector 
\[
(m_{11},\ldots, m_{1,n},m_{21},\ldots,m_{2n},\ldots ,m_{n1},\ldots m_{nn}).
\]
Let $F=F(x,y,z,m):\Omega \times\mathbb{R}\times \mathbb{R}^n\times \mathbb{R}^{n\times n} \rightarrow \mathbb{R}$ measurable satisfying $F(x,\cdot )\in C^1(\mathbb{R}\times \mathbb{R}^n\times \mathbb{R}^{n\times n})$ a.e. $x\in \Omega$ and $F(\cdot,0,0,0)=0$. Then the equation
\begin{equation}\label{su8}
F(x,u,\nabla u(x),\nabla^2u(x))=0,\quad \mathrm{in}\; \Omega
\end{equation}
can be rewritten in the form
\[
A_u\nabla^2u+W_u\cdot \nabla u+V_uu=0 \quad \mathrm{in}\; \Omega,
\]
where $\nabla ^2u=(\partial_{ij}^2u)$ is the hessian matrix of $u$. The coefficients of the preceding equation are given by
\begin{align*}
(A_u)_{ij}(x)&=\int_0^1\partial_{m_{ij}}F(x,tu(x),t\nabla u(x),t\nabla^2u(x))dt,\quad 1\le i,j\le n,
\\
(W_u)_j&=\int_0^1\partial_{z_j}F(x,tu(x),t\nabla u(x),t\nabla^2u(x))dt,\quad 1\le j\le n,
\\
V_u&=\int_0^1\partial_yF(x,tu(x),t\nabla u(x),t\nabla^2u(x))dt.
\end{align*}
Set
\begin{align*}
&U_\delta=\{(y,z,m)\in \mathbb{R}\times \mathbb{R}^n\times\mathbb{R}^{n\times n};\; |y|\le \delta,\; \max[z_j|\le \delta ,\; \max|m_{ij}|\le \delta\},\quad \delta >0,
\\
&\tilde{\mathbf{B}}_\delta=\{u\in W^{2,\infty}(\Omega);\|u\|_{W^{2,\infty}(\Omega)}\le \delta \},\quad \delta >0.
\end{align*}
Then Theorems \ref{theorem.su1} and \ref{theorem.su2} still hold when $\mathscr{E}u=0$, $u\in W^{1,\infty}(\Omega)$, is substituted by \eqref{su8} for $u\in W^{2,\infty}(\Omega)$, provided that following assumptions hold: for each $\delta>0$, there exist $\varkappa_\delta >1$, $g_\delta^0\in L^{n/2}(\Omega)$ and $g_\delta^j\in L^s(\Omega)$, for some $s>n$, such that
\begin{align*}
& \partial_{m_{ij}}F(\cdot,u(\cdot),\nabla u(\cdot),\nabla^2u(\cdot))\in C^{0,1}(\Omega),\quad  u\in W^{2,\infty}(\Omega),
\\
&\|F(\cdot,u(\cdot),\nabla u(\cdot),\nabla^2u(\cdot))\|_{C^{0,1}(\Omega)}\le \varkappa_\delta,\quad u\in \tilde{\mathbf{B}}_\delta ,
\\
&(\partial_{m_{ij}}F(x,y,z,m))\xi\cdot \xi \ge \varkappa_\delta^{-1}|\xi|^2,\quad \xi \in \mathbb{R}^n,\; (x,y,z,m)\in \Omega\times U_\delta,
\\
&|\partial_{z_j}F(x,y,z,m)|\le g_\delta^j(x),\quad (x,y,z,m)\in \Omega\times U_\delta,\; j=1,\ldots ,n,
\\
&|\partial_yF(x,y,z,m)|\le g_\delta^0(x),\quad (x,y,z,m)\in \Omega\times U_\delta.
\end{align*}
}
\end{remark}

\vskip .5cm

\begin{thebibliography}{99}

\bibitem{ARRV} G. Alessandrini, L. Rondi, E. Rosset and S.Vessella, 
\newblock The stability of the Cauchy problem for elliptic equations,
\newblock Inverse Problems 25 (12) (2009), 1-47.

\bibitem{Ba} S. Barb,
\newblock Topics in geometric analysis with applications to partial differential equations, 
\newblock PHD dissertation, University of Missouri-Columbia, 2009.

\bibitem{Bo} L. Bourgeois, 
\newblock About stability and regularization of ill-posed elliptic Cauchy problems: the case of $C^{1,1}$ domains,
\newblock  M2AN Math. Model. Numer. Anal. 44 (4) (2010), 715-735.

\bibitem{BD} L. Bourgeois and J.  Dard\'e, 
\newblock About stability and regularization of ill-posed elliptic Cauchy problems: the case of Lipschitz domains,
\newblock  Appl. Anal. 89 (11) (2010), 1745-1768.

\bibitem{BC} M. Bellassoued and M. Choulli, 
\newblock Global logarithmic stability of the Cauchy problem for anisotropic wave equations,
\newblock arXiv:1902.05878.


\bibitem{Ch2021} M. Choulli, 
\newblock The property of unique continuation for second order evolution PDEs,  
\newblock Partial Differ. Equ. Appl. 2, 67 (2021), 46 p.  

\bibitem{Ch2019} M. Choulli, 
\newblock Comments on the determination of the conductivity at the boundary from the Dirichlet-to-Neumann map,   
\newblock arXiv:2107.03061. 

\bibitem{Ch2020} M. Choulli, 
\newblock New global logarithmic stability result for the Cauchy problem for elliptic equations, 
\newblock Bull. Aust. Math. Soc. 101 (1) (2020) 141-145.   

\bibitem{Ch2016} M. Choulli, 
\newblock Applications of elliptic Carleman inequalities to Cauchy and inverse problems,
\newblock SpringerBriefs in Mathematics. BCAM SpringerBriefs. Springer, [Cham]; BCAM Basque Center for Applied Mathematics, Bilbao, 2016, ix+81 pp.

\bibitem{Do} D. Dos Santos Ferreira, 
\newblock Sharp $L^p$ Carleman estimates and unique continuation,
\newblock  Duke Math. J. 129 (3) (2005), 503-550.

\bibitem{GL1} N. Garofalo and F. Lin, 
\newblock Monotonicity properties of variational integrals, $A_p$ weights and unique continuation,
\newblock Indiana Univ. Math. J. 35 (2) (1986), 245-268.

\bibitem{GL2} N. Garofalo and F. Lin, 
\newblock Unique continuation for elliptic operators: a geometric-variational approach,
\newblock  Commun. Pure Appl. Math. 40 (3) (1987), 347-366.

\bibitem{JK} D. Jerison and C.E. Kenig,
\newblock Unique continuation and absence of positive eigenvalues for Schrödinger operators. With an appendix by E. M. Stein,
\newblock Ann. of Math. 121 (3) (1985), 463-494. 

\bibitem{KT} H. Koch and D. 
\newblock Tataru, Carleman estimates and unique continuation for second-order elliptic equations with nonsmooth coefficients,
\newblock Commun. Pure Appl. Math. 54 (3) (2001), 339-360.

\bibitem{Ku} I. Kukavica, 
\newblock Quantitative uniqueness for second-order elliptic operators,
\newblock  Duke Math. J. 91 (2) (1998), 225-240.

\bibitem{LU} O. A. Ladyzhenskaya and N. N. Ural'tseva, Linear and quasilinear elliptic equations, 
\newblock Translated from the Russian by Scripta Technica, Inc. 
\newblock Translation editor: Leon Ehrenpreis Academic Press, New York-London 1968 xviii+495 pp.

\bibitem{Le} N. Lerner, Carleman inequalities, an introduction and more,
\newblock Grundlehren der mathematischen Wissenschaften [Fundamental Principles of Mathematical Sciences], 353,
\newblock  Springer, Cham, 2019. xxvii+557 pp.
 
 \bibitem{LM} J.-L. Lions and E. Magenes, 
\newblock Non homogeneous boundary value problems and applications, Vol. I, 
\newblock Grundlehren der Mathematischen Wissenschaften, vol. 181, Springer,
1972.

\bibitem{MV} E. Malinnikova and S. Vessella, 
\newblock Quantitative uniqueness for elliptic equations with singular lower order terms,
\newblock  Math. Ann. 353 (4) (2012), 1157-1181.

\bibitem{Re} R. Regbaoui,
\newblock Unique continuation from sets of positive measure. 
\newblock Carleman estimates and applications to uniqueness and control theory (Cortona, 1999), Progress in Nonlinear Differential Equation
and Application, vol. 46, 179-190. Birkh\"auser, Boston (2001).

\bibitem{Ru} A. R\"uland,  
\newblock Unique continuation for sublinear elliptic equations based on Carleman estimates,
\newblock J. Differential Equations 265 (11) (2018), 6009-6035.

\bibitem{SS} J. C. Saut and B. Scheurer, 
\newblock Sur l'unicit\'e du probl\`eme de Cauchy et le prolongement unique pour des \'equations elliptiques \`a coefficients non localement born\'es (French) [On the uniqueness of the Cauchy problem and unique continuation for elliptic equations with not locally bounded coefficients],
\newblock J. Differential Equations 43  (1) (1982), 28-43.


\bibitem{SW} N. Soave and T. Weth, 
\newblock The unique continuation property of sublinear equations,
\newblock  SIAM J. Math. Anal. 50 (4) (2018), 3919-3938. 

\bibitem{So1} C. D. Sogge, 
\newblock Oscillatory integrals and unique continuation for second order elliptic differential equations,
\newblock J. Amer. Math. Soc. 2 (3) (1989), 491-515.

\bibitem{So2} C. D. Sogge,
\newblock Strong uniqueness theorems for second order elliptic differential equations, 
\newblock Amer. J. Math. 112 (6) (1990), 943-984. 

\bibitem{Ve} S. Vessella, 
\newblock Quantitative continuation from a measurable set of solutions of elliptic equations,
\newblock Proc. R. Soc. Edinb. 130 (4) (2000), 909-923.

 \bibitem{Wo} T.H. Wolff, 
 \newblock A property of measures in $\mathbb{R}^n$ and an application to unique continuation,
 \newblock Geom. Funct. Anal. 2 (2) (1992), 225-284.

\end{thebibliography}
\end{document}